\newtheorem{theorem}{Theorem}
\newtheorem{lemma}{Lemma}
\newcommand{\ds}{\displaystyle}
\begin{document}
\author[D. Albanez]{Débora A. F. Albanez}
\address{Departamento Acadêmico de Matemática\\ Universidade Tecnológica Federal do Paraná\\ Cornélio Procópio PR, 86300-000, Brazil} \email{deboraalbanez@utfpr.edu.br}

\author[M. Benvenutti]{Maicon José Benvenutti}
\address{Departamento de Matemática\\ Universidade Federal de Santa Catarina\\ Blumenau SC, 89036-004, Brazil} \email{m.benvenutti@ufsc.br}

\author[S. Little]{Samuel Little}
\address{Department of Applied Physics \\Towson University\\
Towson, MD 21252, USA} \email{slittle2@students.towson.edu}

\author[J. Tian]{Jing Tian}
\address{Department of Mathematics \\Towson University\\
Towson, MD 21252, USA} \email{jtian@towson.edu}

\def\bar{\overline}

\title[Parameter Analysis for 3D Modified Leray-Alpha Model]
{Parameter Error Analysis for the 3D Modified Leray-Alpha Model: Analytical and Numerical Approaches}
%\date

%\subjclass{Primary: 35Q35; Secondary: 35K55.}
% Please provide a minimum of 5 keywords or phrases.
\keywords{turbulence models, data assimilation, parameter estimates}

%\maketitle
\begin{abstract}
In this study, we conduct a parameter error analysis for the 3D modified Leray-$\alpha$ model using both analytical and numerical approaches. We first prove the global well-posedness and continuous dependence of initial data for the assimilated system. Furthermore, given sufficient conditions on the physical parameters and norms of the true solution, we demonstrate that the true solution can be recovered from the approximation solution, with an error determined by the discrepancy between the true and approximating parameters. Numerical simulations are provided to validate the convergence criteria.
\end{abstract}

\maketitle

\section{Introduction}

In recent years, the so-called $\alpha$-models (see \cite{cao2005clark}, \cite{cheskidov2005leray},  \cite{foias2002three}, \cite{ilyin2006modified}, \cite{layton2006well} and references therein)  have attracted significant attention within the fluid dynamics community. These models serve as analytical alternatives to the three-dimensional Navier-Stokes equations (NSE) and provide viable options for computational implementations and simulations. The success of these models as a suitable closure model for turbulence is due to the fact that they conduct as a regularization form of the 3D NSE, involving a lengthscale parameter $\alpha>0$ that is related to a smoothing kernel associated with the Green function of the Helmoltz operator:
$$v=u-\alpha^2\Delta u$$
and we have $v\rightarrow u$ in some sense, as $\alpha\rightarrow 0^+$. In this work, we consider the three-dimensional viscous modified Leray-$\alpha$ (ML-$\alpha$) model
 \begin{equation}
\left\{
\begin{array}{l}
\ds\frac{\partial v}{\partial t}- \nu \Delta v +(v\cdot\nabla)u+\nabla p =  f,   \\
v=(1-\alpha^{2}\Delta)u,\\
\nabla\cdot u  =\nabla\cdot v=  0, \,\, u(0)=u_0,
\end{array}
\right.  \label{1}
\end{equation}
 subject to periodic boundary conditions $\Omega=[0,L]^{3}$, namely, 
\begin{eqnarray}
u(x,t)=u(x+Le_{i},t),\,\, \forall\, (x,t)\in \mathbb{R}^{3}\times \lbrack
0,T ), \label{000}
\end{eqnarray} 
where $e_{1},e_{2}$ and $e_{3}$ is the canonical basis of $\mathbb{R}^{3}$ and $L>0$ is the fixed period. Here, $v$ and $u$  are the nonfiltered and filtered velocities, respectively. Also, $\nu>0$ is the kinematic viscosity, $f=f(x,t)$ is the external force, and $p=p(x,t)$ is the scalar pressure field.

The ML-$\alpha$ model was inspired by the Leray-$\alpha$ model by replacing the nonlinear term $(u\cdot\nabla)v$ in Leray-$\alpha$ model by $(v\cdot \nabla)u$ \cite{ilyin2006modified}. The Leray-$\alpha$ model was first considered by Leray as a general regularization form of the
NSE \cite{leray1934}. It has shown excellent analytical and computational results when using as a closure model in turbulent channels and pipes \cite{cheskidov2005leray, holm1999}. The ML-$\alpha$ model was introduced in \cite{ilyin2006modified} as a closure model of the
Reynolds averaged equations in turbulent channels and pipes. The authors in \cite{ilyin2006modified} showed that when the ML-$\alpha$ model is used as a closure model, it produces the same reduced system of equations as the Navier-Stokes-$\alpha$ model.
The ML-$\alpha$ has many good properties such as a remarkable match to the empirical data for a wide range of
huge Reynolds numbers, the global wellposedness, and a finite dimensional global attractor \cite{ilyin2006modified}. Moreover, for the ML-$\alpha$ model, the steeper slope of the energy spectrum at higher wavenumbers within the inertial range, compared to the traditional $k^{-5/3}$
  Kolmogorov energy spectrum, indicates reduced energy in the higher wavenumber range \cite{ilyin2006modified}. This is consistent with the behavior expected from an effective subgrid-scale model of turbulence.

In this paper, we aim to analyze the behavior of the solutions of ML-$\alpha$ equations when the lengthscale parameter $\alpha>0$ in \eqref{1} is uncertain. To address this, a continuous data assimilation (CDA) technique is used, incorporating spatially discrete observational measurements into the ML-$\alpha$ physical system. In this approach, a ``guess'' parameter $\beta>0$ is used in place of the unknown $\alpha$.
The method of continuous data assimilation was introduced first in \cite{azouani2014continuous} for 2D NSE and later for several other models in different situations, as stochastically noisy data (see \cite{bessaih2015continuous}) and also using observational measurements of only one component of velocity (see \cite {farhat2016abridged}). For more applications of CDA technique, see for instance, \cite{albanez2018continuous}, \cite{albanez2016continuous},
\cite{farhat2015continuous}, \cite{farhat2016data}, \cite{farhat2016charney}, \cite{jolly2017data}, \cite{jolly2015determining}, \cite{jolly2019continuous}, \cite{markowich2016continuous}. 

 To construct an approximate solution for the original ML-$\alpha$ model using the "guess" parameter $\beta$, we employ the following CDA technique applied to the ML-$\alpha$ model:
\begin{equation}
\left\{
\begin{array}{l}
\ds\frac{\partial z}{\partial t}- \nu \Delta z +(z\cdot\nabla)w+\nabla p =  f-\eta(I-\beta^2\Delta)(I_{h}(w)-I_{h}(u)),   \\
z=(1-\beta^{2}\Delta)w,\,\, \nabla\cdot w  =\nabla\cdot z=  0,
\end{array}
\right.  \label{22}
\end{equation}
where $\beta>0$ is the lengthscale parameter. We assume that discrete spatial observational measurements of the real-state solution $u(t)$ of ML-$\alpha$ can be used to construct the linear interpolant operator $I_h$ in \eqref{22}, where $h$ denotes the spatial resolution of the sparse measurements in the data collection process. Moreover, $\eta>0$ represents a nudging parameter, which is fixed and determined based on specific conditions involving the system's physical parameters. Additionally, the interpolant operator $I_{h}:\dot{H}^2{(\Omega)}\longrightarrow \dot{L}^2(\Omega)$ is required to satisfy the approximation property
\begin{equation} 
 \|I_{h}g-g\|^{2}_{L^{2}}\leqslant c_{1}^2h^{2}\|g\|^{2}_{L^{2}} + c_{2}^2h^{4}\|\Delta g\|^{2}_{L^{2}}, \label{LerayProjector}
\end{equation}
where $c_1,c_2>0$ are nondimensional constants.

This work is inspired in \cite{carlson2020parameter}, where the authors applied a similar CDA technique to construct an approximate solution for the 2D Navier-Stokes equations in the absence of the true value of the physical viscosity $\nu>0$. Subsequently, a similar approach was considered for the three-dimensional simplified Bardina and Navier-Stokes-$\alpha$ models in \cite{albanez2024parameter}. In this work, we prove the global well-posedness and continuous dependence of initial data for the assimilated system \eqref{22}, under suitable conditions. Furthermore, given sufficient conditions on the physical parameters and norms of the true solution $u(t)$, we prove that
 $u(t)$ can be recovered through the approximation solution $w(t)$ of \eqref{22}, with an error depending on the difference between the true and approximating parameters $\alpha$ and $\beta$, repectively. To validate our theoretical analysis, we
have presented some numerical simulations. The simulations are powered by
a flexible new Python package, Dedalus.

The paper is organized as follows: In Section \ref{Prelim}, we introduce the functional framework and a priori estimates for Sobolev periodic spaces. Additionally, we present a priori estimates for the ML-$\alpha$ equations, which are used for the analytical analysis. The regularity of solutions for the data assimilation system and the long-time error analysis are present in Section \ref{analyt}. In Section \ref{NumSim}, we provide numerical simulations that validate the long-time error results. Finally, the conclusions are summarized in Section \ref{conclusion}.

\section{ Preliminaries}\label{Prelim}

\label{AAA1}

%%%%%%%%%%%%%%%%%%%%%%%%%%%%%%%%%%%%%%%%%%%%%%%%%%%%%%%%%%%%%%%%%%%%%%%%%%%%%%%%%%%%%%%%%%%%%%%%%%%%%%%%%
%%%%%%%%%%%%%%%%%%%%%%%%%%%%%%%%%%%%%%%%%%%%%%%%%%%%%%%%%%%%%%%%%%%%%%%%%%%%%%%%%%%%%%%%%%%%%%%%%%%%%%%%%

\subsection{ Basic definitions and inequalities}
\label{sb1}

%%%%%%%%%%%%%%%%%%%%%%%%%%%%%%%%%%%%%%%%%%%%%%%%%%%%%%%%%%%%%%%%%%%%%%%%%%%%%%%%%%%%%%%%%%%%%%%%%%%%%%%%%%%%%%%%%%%%%%%%%%%%

To formulate the problem, this section introduces some basic definitions, functional settings, and properties.  Let $\Omega=[0,L]^{3}$ represent the periodic box for some period $L>0$. 
We denote $L^{p}(\Omega) $ as the standard three-dimensional Lebesgue vector spaces. Given the assumption of spatial periodicity, the solutions of (\ref{1})-(\ref{000}) satisfy 
\begin{eqnarray}
\frac{d}{dt}\int_{\Omega}u(x,t)\,dx=\int_{\Omega}f(x,t)\,dx. \nonumber
\end{eqnarray}
Considering the forcing $f$ and initial data $u_0$ such that $\ds\int_\Omega f\, dx=\ds\int_\Omega u_0\,dx=0$, we have that the mean of the solution is invariant, which leads us to zero average spaces.\\
For each $s \in \mathbb{R}$, we define the Hilbert space

\begin{eqnarray}
\dot{V}_{s}= \bigg\{&u(x)=\ds\sum_{K \in \mathbb{Z}^3\backslash\{0\}} \hat{u}_{K}e^{2\pi i \frac{K\cdot x}{L}}; \nonumber\\
&\overline{\hat{u}}_{K}=\hat{u}_{-K}, \,\, \ds\frac{K}{L}\cdot \hat{u}_k=0, \,\, \sum_{K \in \mathbb{Z}^3\backslash\{0\}} \bigg|\ds\frac{K}{L}\bigg|^{2s}|\hat{u}_{K}|^{2}<\infty\bigg\},
\end{eqnarray}
endowed with the inner product
\begin{equation}
(u,v)_{\dot{V}_{s}}= L^{3}\sum_{K \in \mathbb{Z}^3\backslash\{0\}} \left(\frac{2\pi|K|}{L}\right)^{2s} \hat{u}_{K}\cdot\overline{\hat{v}_{K}}.
\end{equation}
We have $\dot{V}_{s_{1}} \subset \dot{V}_{s_{2}}$ when $s_{1} \geqslant s_{2}$. For all $s \geqslant 0$, $\dot{V}_{-s}$ is the dual of $\dot{V}_{s}$  (see \cite{Foias_Manley_Rosa_Temam_2001} and \cite{doi:10.1137/1.9781611970050}).

We denote by $\mathcal{P}:\dot{H}_{0}\rightarrow \dot{V}_{0}$ the classical Helmholtz-Leray orthogonal projection given by 
$$ \mathcal{P}u= \sum_{K \in \mathbb{Z}^3\backslash\{0\}} \left(\hat{u}_{K}-\frac{K(\hat{u}_{K} \cdot K)}{|K|^{2}} \right)e^{2\pi i \frac{K\cdot x}{L}}, $$
 and $A: \dot{V}_{2s} \rightarrow \dot{V}_{2s-2}$ the operator given by 
$$Au=\sum_{K \in \mathbb{Z}^3\backslash\{0\}} \frac{4\pi^{2}|K|^{2}}{L^{2}}\hat{u}_{K}e^{2\pi i \frac{K\cdot x}{L}}.$$
We can show that, under periodic boundary conditions, $Au=-\Delta u = -\mathcal{P} \Delta u=-\Delta \mathcal{P} u$ for all $u \in \dot{V}_{2}$.

 By spectral theory, there exists a sequence of eigenfunctions $\{\phi_{K}\}_{K \in \mathbb{N}^3\backslash\{0\}}$ such that
\begin{equation}\label{01}
\left\{\begin{array}{l}
\{\phi_{K}\}\,\, \mbox{ is an  orthonormal basis of }\,\,   \dot{V}_{0},\\
\{\lambda_{K}^{-\frac{1}{2}}\phi_{K}\}\,\, \mbox{ is an orthonormal basis of }\,\, \dot{V}_{1},
\end{array}\right. 
\end{equation}
 where  $\displaystyle \lambda_{K}= \frac{4\pi^{2}|K|^{2}}{L^{2}}$ is the set of eigenvalues of $A: \dot{V}_{0} \rightarrow \dot{V}_{0}$ with domain $D(A)=\dot{V}_{2}$.

We adopt the classical notations  $H= \dot{V}_{0},\ V= \dot{V}_{1}$, and $D(A)'=\dot{V}_{-2}$, besides $|u|=\|u\|_{L^{2}}$, $(u,v)=(u,v)_{L^{2}}$, $\|u\|=\|\nabla u\|_{L^{2}}$, and $((u,v))=(\nabla u,\nabla v)_{_{L^{2}}}$.

 Due to the  Poincar{\'e} inequalities
 \begin{eqnarray}
 |u|^{2}\leqslant\lambda^{-1}_{1}\|u\|^{2}, \,\,\,\forall\,\, u\in V \mbox{  and  }\,\, \|u\|^{2}\leqslant\lambda^{-1}_{1}|Au|^{2},\,\,\, \forall\,\, u\in D(A), \label{hhh111}
\end{eqnarray}
where $\displaystyle \lambda_{1}= \frac{4\pi^{2}}{L^{2}}$,  we have the following equivalent norms
\begin{align}
\|u\|_{H}= |u|, \,\,\,\,\, \|u\|_{V}= \| u\|, \,\,\,\,\, \mbox{and} \,\,\,\,\,\|u\|_{\dot{V}_{2}}= |A u|.
\end{align}
 
 We revisit specific three-dimensional cases of the Gagliardo-Nirenberg inequality  (see \cite{Friedman}):
\begin{equation}
\left\{
\begin{array}{ll}
\Vert g\Vert _{L^{6}}\leqslant c\Vert  g\Vert, & \mbox{ for all  } g\in V; \\
\Vert g\Vert _{L^{3}} \leqslant c|g|^{\frac{1}{2}}\Vert g\Vert^{\frac{1}{2}}, & \mbox{ for all  } g\in V; \\
\Vert g\Vert _{L^{\infty}} \leqslant c\Vert  g\Vert^{\frac{1}{2}}\,  |Ag|^{\frac{1}{2}}, & \mbox{ for all  } g\in \dot{V}_{2},%
\end{array}%
\right.  \label{gn}
\end{equation}%
where  $c$  is a dimensionless constant.

 For each $\alpha>0$, we have
\begin{eqnarray}
(I+\alpha^{2}A)^{-1}u = \sum_{K \in \mathbb{Z}^3\backslash\{0\}} \frac{L^{2}}{L^{2}+4\alpha^{2}\pi^{2} |K|^{2}}\hat{u}_{K}e^{2\pi i \frac{K\cdot x}{L}},
\label{hhh1}
\end{eqnarray}
and also the estimates
\begin{align}
|(I+\alpha^{2}A)^{-1}u| \leqslant |u| \,\,\,\, \mbox{ and } \,\,\,\, |(I+\alpha^{2}A)^{-1}u| \leqslant \frac{1}{4\pi^2\alpha^{2}}\|u\|_{\dot{V}_{-2}}. \label{aa-1}
\end{align}

We define the bilinear form $B:D(A)\times D(A)\rightarrow H$ as the continuous operator 
$$B(v,u)=\mathcal{P}[(v\cdot\nabla) u].$$
 For $u,v, w \in D(A)$, the bilinear term $B$ has the property
\begin{equation}
 ( B(v,u),w )=-( B(v,w),u), \nonumber
 \end{equation}
and hence 
 \begin{equation}\label{BilinearZera}
( B(v,w),w)=0.
 \end{equation}
Moreover,
\begin{equation}\label{bilin4}
|( B(v,u),w)|\leqslant C|v|^{\frac{1}{2}}\|v\|^{\frac{1}{2}}\|u\|\|w\|.
\end{equation}
For every $u, v\in D(A)$ and $w\in H$, we also have
\begin{equation}\label{bilin5}
|(B(v,u),w)|\leqslant C|v| \|u\| \|w\|^{\frac{1}{2}}|Aw|^{\frac{1}{2}},
\end{equation}
which further implies, by duality:
\begin{equation}\label{BI6}
\|B(v,u)\|_{D(A)'} \leqslant C \lambda_{1}^{-\frac{1}{4}}|v|\,\|u\|.
\end{equation}
From \eqref{aa-1} and \eqref{BI6}, we obtain
\begin{equation}\label{bilin6}
|(I+\alpha^{2}A)^{-1}B(v,u)|\leqslant C\frac{\lambda_{1}^{-\frac{1}{4}}}{4\pi^{2}\alpha^{2}}|v|\|u\|.
\end{equation}

We write  (\ref{1}) using functional settings as

\begin{equation}\label{MLalpha}
    \left\{\begin{array}{l}\ds\frac{dv}{dt}+\nu A v+B(v,u)=f, \\ v=u+\alpha^{2} Au, \quad \nabla \cdot v=\nabla \cdot u=0,\end{array}\right.
\end{equation}
with initial condition $u(0)=u_{0}$ and $v(0)=u_{0}+\alpha^{2}Au_{0}$.\\

We revisit the global well-posedness results for three-dimensional viscous modified Leray-$\alpha$ (\ref{1}), as established in \cite{ilyin2006modified}.

\begin{theorem}[Existence and Uniqueness of Regular Solutions \cite{ilyin2006modified}] \label{ExiUniBard}
Let $f\in L^{2}([0,T];H)$, $u(0)=u_{0}\in V$, and $T>0$, the system \eqref{MLalpha} has a unique regular solution $u\in C([0,T);V)\,\cap\, L^{2}([0,T];D(A))$, with $\frac{du}{dt}\in L^{2}([0,T];H)$.
\end{theorem}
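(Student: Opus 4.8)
The plan is to prove Theorem~\ref{ExiUniBard} by the standard Faedo--Galerkin method, taking advantage of the structural feature that makes the ML-$\alpha$ model globally well-posed in three dimensions: when the momentum equation in \eqref{MLalpha} is tested against the \emph{filtered} velocity $u$ rather than against $v$, the nonlinear contribution vanishes identically, because $(B(v,u),u)=0$ by \eqref{BilinearZera}. Concretely, I would let $P_m$ denote the orthogonal projection of $H$ onto $\mathrm{span}\{\phi_1,\dots,\phi_m\}$, set $u_m=P_m u$ and $v_m=(I+\alpha^2A)u_m$ (which stays in the same subspace since $A$ acts diagonally on the $\phi_K$), and consider the Galerkin system $\frac{dv_m}{dt}+\nu A v_m+P_m B(v_m,u_m)=P_m f$ with $u_m(0)=P_m u_0$. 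This is a locally Lipschitz ODE with a quadratic nonlinearity, so it has a unique solution on a maximal interval by Picard--Lindel\"of; the a priori bound below then shows the solution does not blow up, so it exists on all of $[0,T]$.

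For the a priori estimates I would take the $H$-inner product of the Galerkin equation with $u_m$. Using $v_m=(I+\alpha^2A)u_m$, the self-adjointness of $A$, and \eqref{BilinearZera} one gets
\[
\frac{1}{2}\frac{d}{dt}\bigl(|u_m|^2+\alpha^2\|u_m\|^2\bigr)+\nu\|u_m\|^2+\nu\alpha^2|Au_m|^2=(f,u_m),
\]
and then, bounding $(f,u_m)$ by Cauchy--Schwarz, Young, and the Poincar\'e inequality \eqref{hhh111} so as to absorb a $\frac{\nu}{2}\|u_m\|^2$ term, I obtain
\[
\frac{d}{dt}\bigl(|u_m|^2+\alpha^2\|u_m\|^2\bigr)+\nu\|u_m\|^2+2\nu\alpha^2|Au_m|^2\leqslant\frac{1}{\nu\lambda_1}|f|^2.
\]
Since the right-hand side is independent of $u_m$, a direct integration (no Gr\"onwall is needed) yields, uniformly in $m$, a bound for $u_m$ in $L^\infty([0,T];V)\cap L^2([0,T];D(A))$ in terms of $\|u_0\|_V$ and $\|f\|_{L^2([0,T];H)}$. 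Solving the equation for the time derivative, $\frac{du_m}{dt}=(I+\alpha^2A)^{-1}P_m f-\nu Au_m-(I+\alpha^2A)^{-1}P_m B(v_m,u_m)$, and estimating the three terms via \eqref{aa-1} and \eqref{bilin6} --- noting that $|v_m|\leqslant|u_m|+\alpha^2|Au_m|\in L^2([0,T])$ and $\|u_m\|\in L^\infty([0,T])$ --- gives $\frac{du_m}{dt}$ bounded in $L^2([0,T];H)$, again uniformly in $m$.

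The passage to the limit is then routine. By the Aubin--Lions lemma, using the compact embedding $D(A)\hookrightarrow\hookrightarrow V\hookrightarrow H$, a subsequence satisfies $u_m\to u$ strongly in $L^2([0,T];V)$, weakly-$*$ in $L^\infty([0,T];V)$, weakly in $L^2([0,T];D(A))$, and $\frac{du_m}{dt}\rightharpoonup\frac{du}{dt}$ weakly in $L^2([0,T];H)$; consequently $v_m=(I+\alpha^2A)u_m\rightharpoonup v=(I+\alpha^2A)u$ weakly in $L^2([0,T];H)$ while $\nabla u_m\to\nabla u$ strongly in $L^2([0,T];L^2(\Omega))$, so the product $(v_m\cdot\nabla)u_m$ converges to $(v\cdot\nabla)u$ weakly in $L^1$ of space-time, which suffices to identify $B(v,u)$ in the limit equation. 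Finally, $u\in L^2([0,T];D(A))$ together with $\frac{du}{dt}\in L^2([0,T];H)$ forces $u\in C([0,T];[D(A),H]_{1/2})=C([0,T];V)$ by the Lions--Magenes interpolation theorem, which completes the existence part.

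For uniqueness, I would take two regular solutions $u_1,u_2$ with the same data, set $u=u_1-u_2$, $v=(I+\alpha^2A)u$, and use bilinearity to write $B(v_1,u_1)-B(v_2,u_2)=B(v,u_1)+B(v_2,u)$. Testing the difference equation with $u$ annihilates $(B(v_2,u),u)$ by \eqref{BilinearZera} and leaves
\[
\frac{1}{2}\frac{d}{dt}\bigl(|u|^2+\alpha^2\|u\|^2\bigr)+\nu\|u\|^2+\nu\alpha^2|Au|^2=-(B(v,u_1),u).
\]
The main obstacle --- as usual for 3D models, and in contrast to the existence part, where the nonlinearity disappeared --- is to estimate the right-hand side so that the dangerous $|Au|$-contributions are absorbed by the dissipation $\nu\alpha^2|Au|^2$: using \eqref{bilin5}, the interpolation $\|u\|\leqslant|u|^{1/2}|Au|^{1/2}$, the bound $|v|\leqslant|u|+\alpha^2|Au|$, and Young's inequality, one controls $|(B(v,u_1),u)|$ by $\frac{\nu}{2}\alpha^2|Au|^2$ plus a term $\Phi(t)\bigl(|u|^2+\alpha^2\|u\|^2\bigr)$ whose coefficient $\Phi$ is integrable on $[0,T]$ precisely because $u_1\in L^\infty([0,T];V)\cap L^2([0,T];D(A))$. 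Absorbing the dissipative term and applying Gr\"onwall's inequality then gives $u\equiv0$, establishing uniqueness; carried out with arbitrary data, the same estimate also delivers continuous dependence on the initial conditions in the $V$-norm.
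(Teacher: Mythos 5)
The paper does not prove this statement at all -- Theorem~\ref{ExiUniBard} is quoted from \cite{ilyin2006modified} as a known result. Your Galerkin argument is correct and is essentially the standard proof: the cancellation $(B(v_m,u_m),u_m)=0$ giving the $V$-level energy identity with full $\nu\alpha^2|Au_m|^2$ dissipation, the time-derivative bound via \eqref{aa-1} and \eqref{bilin6}, Aubin--Lions, and a uniqueness estimate that absorbs the $|Au|^{3/2}$-type terms into the dissipation -- and it mirrors the strategy the paper itself uses for the companion well-posedness result, Theorem~\ref{aaa1}.
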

In order to establish the result on the long-time error stated in Theorem \ref{teoMLalpha}, we enunciate the following alternative Gronwall's version, whose proof can be found in \cite{albanez2024parameter}.
\begin{lemma}[Gronwall Inequality \cite{albanez2024parameter}]\label{GronwallVersion}
Let $\xi:[t_0,\infty)\rightarrow[0,\infty)$ be an absolutely continuous function and let $\beta:[t_0,\infty)\rightarrow[0,\infty)$ be locally integrable. Assume the existence of positive constants $C,M$ and $T$ such that
\begin{equation}\label{int1}
    \sup\limits_{t\geqslant t_0}\ds\int_t^{t+T}\beta(s)\,ds\leqslant M
\end{equation}
and
\begin{equation}\label{ineq1}
    \ds\frac{d}{dt}\xi(t)+C\xi(t)\leqslant \beta(t)
\end{equation}
  are satisfied for all $t\geqslant t_0$. Then
  \begin{equation}\label{int211}
    \xi(t)\leqslant e^{-C(t-t_0)}\xi(t_0)+M\left(\frac{e^{2CT}}{e^{CT}-1}\right),  
  \end{equation}
  
   for all $t\geqslant t_0$.
\end{lemma}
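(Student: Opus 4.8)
The plan is to run the classical integrating-factor argument and then control the resulting forcing integral by chopping $[t_0,t]$ into windows of length $T$, on each of which the hypothesis \eqref{int1} applies. First I would multiply \eqref{ineq1} by the integrating factor $e^{Cs}$ to obtain
\[
\frac{d}{ds}\bigl(e^{Cs}\xi(s)\bigr)\leqslant e^{Cs}\beta(s)\qquad\text{for a.e. }s\geqslant t_0 .
\]
Since $\xi$ is absolutely continuous and $\beta$ is locally integrable, the function $s\mapsto e^{Cs}\xi(s)$ is absolutely continuous, so the fundamental theorem of calculus may be applied on $[t_0,t]$ for any $t\geqslant t_0$, yielding
\[
\xi(t)\leqslant e^{-C(t-t_0)}\xi(t_0)+e^{-Ct}\int_{t_0}^{t}e^{Cs}\beta(s)\,ds .
\]
The first term is already the one appearing in \eqref{int211}, so the whole matter reduces to showing $e^{-Ct}\int_{t_0}^{t}e^{Cs}\beta(s)\,ds\leqslant M\,e^{2CT}/(e^{CT}-1)$.

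For that estimate I would set $n=\lfloor (t-t_0)/T\rfloor$, so that $[t_0,t]\subseteq\bigcup_{k=0}^{n}[t_0+kT,\,t_0+(k+1)T]$. On the $k$-th subinterval one has $e^{Cs}\leqslant e^{C(t_0+(k+1)T)}$, and because $\beta\geqslant 0$ the hypothesis \eqref{int1} gives $\int_{t_0+kT}^{t_0+(k+1)T}\beta(s)\,ds\leqslant M$; summing the geometric series,
\[
\int_{t_0}^{t}e^{Cs}\beta(s)\,ds\leqslant M e^{Ct_0}e^{CT}\sum_{k=0}^{n}e^{CkT}
= M e^{Ct_0}e^{CT}\,\frac{e^{C(n+1)T}-1}{e^{CT}-1}.
\]
Multiplying by $e^{-Ct}$ and using $t-t_0\geqslant nT$ (hence $e^{-C(t-t_0)}\leqslant e^{-CnT}$) together with $e^{C(n+1)T}-1\leqslant e^{C(n+1)T}$ produces exactly the bound $M\,e^{2CT}/(e^{CT}-1)$, which combined with the displayed inequality above gives \eqref{int211}.

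I do not expect a genuine obstacle here: the argument is essentially a bookkeeping exercise. The only points needing a little care are the justification that the product-rule/integration step is legitimate under the stated regularity (absolute continuity of $\xi$, local integrability of $\beta$, so the inequality holds a.e.\ and integrates), and the observation that the nonnegativity of $\beta$ is precisely what allows replacing an integral over a partial window by the full-window bound $M$ and enlarging the domain of integration from $[t_0,t]$ to $[t_0,t_0+(n+1)T]$. The slack between the constant obtained this way and the sharper $e^{CT}/(e^{CT}-1)$ is an artifact of bounding $e^{Cs}$ by its maximum on each window and of $t$ not being an exact endpoint $t_0+kT$; since only the qualitative form of \eqref{int211} is needed later, this loss is harmless.
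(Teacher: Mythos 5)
Your proof is correct, and since the paper itself omits the argument (deferring to \cite{albanez2024parameter}), there is nothing in-text to diverge from: the integrating-factor step followed by partitioning $[t_0,t]$ into windows of length $T$, bounding $e^{Cs}$ by its value at the right endpoint of each window, and summing the geometric series is precisely the standard route to this uniform Gronwall-type bound, and your bookkeeping reproduces the stated constant $e^{2CT}/(e^{CT}-1)$ exactly. The two points you flag — interpreting \eqref{ineq1} almost everywhere for the absolutely continuous $\xi$, and using $\beta\geqslant 0$ to enlarge the domain of integration to $[t_0,t_0+(n+1)T]$ — are indeed the only places requiring care, and you handle both.
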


\subsection{Estimates for the ML-$\alpha$ solutions}
Here, we present some estimates related to the global unique solutions of the ML-$\alpha$ \eqref{MLalpha}.

\begin{lemma}\label{LEM1}
    For all $t\geqslant 0$, we have the following estimate for the global unique solution of \eqref{MLalpha}: 
    
$$|u(t)|^{2}+\alpha^{2}\left\| u(t)\right\|^{2} \leqslant e^{-\nu \lambda_{1} t}\left(|u_{0}|^{2}+\alpha^{2}\| u_{0}\|^{2}\right)+\frac{1}{\lambda_{1}^{2}\nu^2} \sup|f(s)\|_{L}^{2},
$$
where $u_0=u(0)\in V$ is the initial data. Defining
\begin{equation}\label{M1}
M_{1}:=|u_{0}|^{2}+\alpha^{2}\left\| u(0)\right\|^{2}+\frac{1}{\lambda_{1}^{2} \nu^{2}} \sup _{s\geqslant 0 }|f(s)|^{2},    
\end{equation}
it follows that:
\begin{equation}
|u(t)|^{2}+\alpha^{2}\| u(t)\|^{2} \leqslant M_{1}. \quad 
\end{equation}

 Furthermore, for all $T>0$ and $t\geqslant 0$, the following estimate holds:
  $$
\nu \int_{t}^{t+T} \|u(s) \|^{2}+\alpha^{2} | A u (s) |^{2} ds \leqslant | u(t) |^{2}+\alpha^{2} \| u(t)\|^{2}+\frac{T}{\nu \lambda_{1}} \sup _{s \geqslant 0} | f(s) |^{2}.
$$  
In particular,
$$
\int_{t}^{t+T}\|u(s)\|^{2}+\alpha^{2}|A u(s)|^{2} d s \leqslant \frac{M_{1}}{\nu}+\frac{T}{\nu^{2} \lambda_{1}} \sup |f(s)|^{2}.
$$ 
\end{lemma}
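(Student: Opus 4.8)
The plan is to run the standard energy method on the functional form \eqref{MLalpha}, testing the momentum equation against the filtered velocity $u$ in $H$. Since $v=u+\alpha^{2}Au$, setting $y(t):=|u(t)|^{2}+\alpha^{2}\|u(t)\|^{2}=(v,u)$ one has $(\tfrac{dv}{dt},u)=\tfrac12\tfrac{d}{dt}y(t)$ and $\nu(Av,u)=\nu(v,Au)=\nu\big(\|u\|^{2}+\alpha^{2}|Au|^{2}\big)$; this manipulation is rigorous in the regularity class of Theorem \ref{ExiUniBard}, where $u\in C([0,T);V)\cap L^{2}([0,T];D(A))$ with $\tfrac{du}{dt}\in L^{2}([0,T];H)$, by the usual Lions--Magenes lemma. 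The nonlinear term drops out by the antisymmetry \eqref{BilinearZera}, since $(B(v,u),u)=0$. This produces the energy identity
$$
\frac12\frac{d}{dt}y(t)+\nu\|u\|^{2}+\nu\alpha^{2}|Au|^{2}=(f,u).
$$

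Next I would bound the forcing term by Cauchy--Schwarz, the Poincaré inequality \eqref{hhh111}, and Young's inequality, namely $(f,u)\leqslant|f|\,|u|\leqslant\lambda_{1}^{-1/2}|f|\,\|u\|\leqslant\tfrac{\nu}{2}\|u\|^{2}+\tfrac{1}{2\nu\lambda_{1}}|f|^{2}$. Absorbing $\tfrac{\nu}{2}\|u\|^{2}$ on the left and multiplying by $2$ gives the master differential inequality
$$
\frac{d}{dt}y(t)+\nu\|u\|^{2}+2\nu\alpha^{2}|Au|^{2}\leqslant\frac{1}{\nu\lambda_{1}}|f|^{2}\leqslant\frac{1}{\nu\lambda_{1}}\sup_{s\geqslant0}|f(s)|^{2}.
$$
For the pointwise decay bound, discard one copy of $\nu\alpha^{2}|Au|^{2}$ and apply Poincaré twice ($\|u\|^{2}\geqslant\lambda_{1}|u|^{2}$ and $|Au|^{2}\geqslant\lambda_{1}\|u\|^{2}$) to get $\nu\|u\|^{2}+\nu\alpha^{2}|Au|^{2}\geqslant\nu\lambda_{1}\big(|u|^{2}+\alpha^{2}\|u\|^{2}\big)=\nu\lambda_{1}y(t)$, hence $\tfrac{d}{dt}y+\nu\lambda_{1}y\leqslant\tfrac{1}{\nu\lambda_{1}}\sup|f|^{2}$; the classical Gronwall inequality then yields $y(t)\leqslant e^{-\nu\lambda_{1}t}y(0)+\tfrac{1}{\nu^{2}\lambda_{1}^{2}}\sup|f|^{2}$, which is the first assertion, and the bound $y(t)\leqslant M_{1}$ follows immediately from $e^{-\nu\lambda_{1}t}\leqslant1$ together with the definition \eqref{M1}. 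For the time-averaged estimates, instead integrate the master inequality over $[t,t+T]$, drop the nonnegative boundary term $y(t+T)$ and use $2\nu\alpha^{2}\int|Au|^{2}\geqslant\nu\alpha^{2}\int|Au|^{2}$ to obtain $\nu\int_{t}^{t+T}\!\big(\|u\|^{2}+\alpha^{2}|Au|^{2}\big)\,ds\leqslant y(t)+\tfrac{T}{\nu\lambda_{1}}\sup|f|^{2}$, and finally substitute $y(t)\leqslant M_{1}$ and divide by $\nu$ to get the last inequality.

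There is no serious obstacle: the argument is entirely standard. The only points meriting a little care are the rigorous justification of the identity $(\tfrac{dv}{dt},u)=\tfrac12\tfrac{d}{dt}y(t)$ in the stated regularity class (handled by the usual density argument), and the constant bookkeeping --- in particular retaining enough of the $\alpha^{2}|Au|^{2}$ dissipation to simultaneously close the Gronwall step and keep the full quantity $\|u\|^{2}+\alpha^{2}|Au|^{2}$ inside the time-integral estimates.
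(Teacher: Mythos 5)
Your proof is correct and is exactly the argument the paper intends: the paper's proof of this lemma simply defers to Lemma 2 of \cite{albanez2024parameter}, noting that the whole argument reduces to the cancellation $(B(v,u),u)=0$, which is precisely the energy-method computation (test against $u$, use Poincar\'e and Young on $(f,u)$, then Gronwall and time-integration) that you carry out explicitly. Your constant bookkeeping and the derivation of all four stated inequalities check out.
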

\begin{proof}

The proof follows the same steps as the proof of Lemma 2 in \cite{albanez2024parameter} for the Navier-Stokes-$\alpha$ equations, since $(B(v, u), u)=(\tilde{B}(u, v), u)=0$. 
\end{proof}

 \begin{lemma}\label{LEM2}
For all $T>0$ and $t\geqslant 0$, we have the following estimate for the time-derivative of the solutions of \eqref{MLalpha}:
\begin{align}\label{DER}
   \ds\int_t^{t+T}\left|\frac{d u}{d t}(s)\right|^2ds\leqslant
   &\left(\ds\frac{6 M_{1}}{\nu \alpha^{2}}+\frac{6T}{\nu^{2} \lambda_{1} \alpha^{2}} \sup\limits_{s \geqslant 0} |f(s)|^{2} \right) \left(\nu^2 +\ds\frac{c^2 M_{1}}{16 \pi^{4} \lambda_{1}^{1 / 2} \alpha^{2}} \right)\nonumber\\
    &+\ds\frac{3 c^{2}M_{1}^2T}{16 \pi^{4} \alpha^{6} \lambda_{1}^{1 / 2}} + 3T \sup\limits_{s \geqslant 0}|f(s)|^2,  
\end{align}
where $M_1$ is defined in \eqref{M1}.
  \end{lemma}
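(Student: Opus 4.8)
The plan is to solve for $\tfrac{du}{dt}$ by inverting the Helmholtz operator $I+\alpha^{2}A$. From \eqref{MLalpha} we have $\tfrac{dv}{dt}=f-\nu Av-B(v,u)$, and since $v=(I+\alpha^{2}A)u$ while $A$ commutes with $(I+\alpha^{2}A)^{-1}$, applying $(I+\alpha^{2}A)^{-1}$ to the equation yields
\begin{equation*}
\frac{du}{dt}=-\nu Au+(I+\alpha^{2}A)^{-1}f-(I+\alpha^{2}A)^{-1}B(v,u).
\end{equation*}
Taking the $H$-norm, using the triangle inequality together with $(a+b+c)^{2}\leqslant 3(a^{2}+b^{2}+c^{2})$ and the contraction bound $|(I+\alpha^{2}A)^{-1}f|\leqslant|f|$ from \eqref{aa-1}, we obtain the pointwise estimate
\begin{equation*}
\left|\frac{du}{dt}\right|^{2}\leqslant 3\nu^{2}|Au|^{2}+3|f|^{2}+3\left|(I+\alpha^{2}A)^{-1}B(v,u)\right|^{2}.
\end{equation*}

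For the nonlinear contribution I would use \eqref{bilin6}, namely $|(I+\alpha^{2}A)^{-1}B(v,u)|\leqslant C\lambda_{1}^{-1/4}(4\pi^{2}\alpha^{2})^{-1}|v|\,\|u\|$. The point to watch is that $v$ is \emph{not} bounded uniformly in time by $M_{1}$, so one cannot simply substitute Lemma \ref{LEM1} here. Instead, write $v=u+\alpha^{2}Au$, so $|v|\leqslant|u|+\alpha^{2}|Au|$, and invoke the uniform bounds $|u|^{2}\leqslant M_{1}$ and $\|u\|^{2}\leqslant M_{1}\alpha^{-2}$ from Lemma \ref{LEM1}. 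Then $|v|^{2}\|u\|^{2}$ splits into a genuinely bounded piece of order $M_{1}^{2}\alpha^{-2}$ and a piece of order $M_{1}\alpha^{2}|Au|^{2}$ which must be kept under the time integral. Feeding this back into the pointwise estimate and combining the $M_{1}\alpha^{2}|Au|^{2}$ piece with the viscous term $3\nu^{2}|Au|^{2}$ produces a pointwise bound in which $\left|\tfrac{du}{dt}\right|^{2}$ is dominated by an absolute constant times $\left(\nu^{2}+\tfrac{c^{2}M_{1}}{16\pi^{4}\lambda_{1}^{1/2}\alpha^{2}}\right)|Au|^{2}$, plus an absolute constant times $\tfrac{c^{2}M_{1}^{2}}{16\pi^{4}\lambda_{1}^{1/2}\alpha^{6}}$, plus $3|f|^{2}$.

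It then remains to integrate over $[t,t+T]$. The $|f|^{2}$ term contributes $3T\sup_{s\geqslant0}|f(s)|^{2}$; the constant term contributes a multiple of $T$; and the $|Au|^{2}$ term is absorbed by the parabolic smoothing estimate $\int_{t}^{t+T}\alpha^{2}|Au(s)|^{2}\,ds\leqslant \tfrac{M_{1}}{\nu}+\tfrac{T}{\nu^{2}\lambda_{1}}\sup_{s\geqslant0}|f(s)|^{2}$ of Lemma \ref{LEM1} (after factoring out $\alpha^{-2}$), which is exactly what produces the product $\left(\tfrac{6M_{1}}{\nu\alpha^{2}}+\tfrac{6T}{\nu^{2}\lambda_{1}\alpha^{2}}\sup|f|^{2}\right)\left(\nu^{2}+\tfrac{c^{2}M_{1}}{16\pi^{4}\lambda_{1}^{1/2}\alpha^{2}}\right)$ in \eqref{DER}; keeping track of the numerical constants through the Cauchy--Schwarz splittings gives the remaining terms. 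I expect the main obstacle to be precisely the treatment of $|v|$ (equivalently $\alpha^{2}|Au|$) in the nonlinear term: since it has no uniform-in-time bound, one is forced to carry the $|Au|^{2}$ contribution into the time average rather than into an $L^{\infty}_{t}$ estimate, and it is this that generates both the product structure and the highest negative power $\alpha^{-6}$ in \eqref{DER}.
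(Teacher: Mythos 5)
Your proposal is correct and follows essentially the same route as the paper: apply $(I+\alpha^{2}A)^{-1}$, bound the nonlinear term via \eqref{bilin6} with $|v|\leqslant|u|+\alpha^{2}|Au|$, keep the resulting $|Au|^{2}$ contribution under the time integral, and close with the bounds of Lemma \ref{LEM1}. The only caveat is bookkeeping: to land exactly on the coefficient $3$ in front of the $c^{2}M_{1}^{2}T$ term you should group the two $|Au|$-proportional pieces \emph{before} squaring (as the paper does, yielding the prefactor $6$ on the $|Au|^{2}$ block and $3$ on the remainder), rather than squaring the full nonlinear term first and then splitting.
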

\begin{proof}
     
  Applying $\left(I+\alpha^{2} A\right)^{-1}$ on \eqref{MLalpha}, we have the following ML-$\alpha$ equivalent system:
$$
\frac{du}{d t}+\nu A u+\left(I+\alpha^{2} A\right)^{-1} B(v, u)=\left(I+\alpha^{2} A\right)^{-1} f.
$$

Using \eqref{bilin6}, we have

$$
\begin{aligned}
\left|\frac{d u}{d t}\right| 
& \leqslant \nu|A u|+|\left(I+\alpha^{2} A\right)^{-1} B(v, u)|+|\left(I+\alpha^{2} A\right)^{-1} f| \\
& \leqslant \nu|A u|+\frac{c}{4 \pi^{2} \alpha^{2} \lambda_{1}^{1 / 4}}|v|\,\| u\|+|f| \\
& \leqslant \nu|A u|+\frac{c}{4 \pi^{2} \alpha^{2} \lambda_{1}^{1 / 4}}\|u\|\left(|u|+\alpha^{2}|A u|\right)+|f|.
\end{aligned}
$$
Therefore
\begin{equation*}
 \left|\frac{d u}{d t}\right| \leqslant\left|A u\right|\left(\nu+\frac{c}{4 \pi^{2} \lambda_{1}^{1 / 4}}\|u\|\right)+\frac{c}{4 \pi^{2} \alpha^{2}\lambda_{1}^{1 / 4}} \| u\|\, | u|+| f |, 
\end{equation*}
which yields
\begin{equation*}
 \left|\frac{d u}{d t}\right|^2\leqslant  6|A u|^{2}\left(\nu^{2}+\frac{c^{2}}{16 \pi^{4} \lambda_{1}^{1 / 2}}\|u\|^{2}\right) +\frac{3 c^{2}}{16 \pi^{4} \alpha^{4} \lambda_{1}^{1 / 2}}\|u\|^{2}|u|^{2}+3|f|^{2}.
\end{equation*}
Finally, integrating from $t$ to $t+T$ and using Lemma \ref{LEM1}, we have 
\begin{eqnarray}
   \ds\int_t^{t+T}\left|\frac{d u}{d t}(s)\right|^2ds\leqslant 
   & \left(\ds\frac{6 M_{1}}{\nu \alpha^{2}}+\frac{6T}{\nu^{2} \lambda_{1} \alpha^{2}} \sup\limits_{s \geqslant 0} |f(s)|^{2} \right) \left(\nu^2 +\ds\frac{c^2 M_{1}}{16 \pi^{4} \lambda_{1}^{1 / 2} \alpha^{2}} \right)\nonumber\\
    &+ \left(\ds\frac{3 c^{2}}{16 \pi^{4} \alpha^{4} \lambda_{1}^{1 / 2}} \cdot \ds\frac{M_{1}}{\alpha^{2}} \cdot M_{1}\right) T + 3T \sup\limits_{s \geqslant 0}|f(s)|^2.  \nonumber
\end{eqnarray}
This leads to \eqref{DER}.

\end{proof}

%%%%%%%%%%%%%%%%%%%%%%%%%%%%%%%%%%%%%%%%%%%%%%%%%%%%%%%%%%%%%%%%%%%%%%%%%%%%%
\section{Regularity Analysis and Error Estimates}\label{analyt}

\subsection{Regularity analysis for the data assimilation system}
Using the functional setting again, the continuous data assimilation system (\ref{22}) is equivalent to
\begin{equation}\label{MLalphaCDA}
 \left\{
 \begin{array}{l}
\ds\frac{dz}{dt}+\nu Az+B(z, w)=f-\eta(I+\beta^{2} A)\mathcal{P}(I_{h} w-I_{h} u), \\
 z=w+\beta^{2} A w, \nabla \cdot z=\nabla \cdot w=0,
 \end{array}\right.   
\end{equation}
with initial condition $w(0)=w_{0}$ and $z(0)=w_{0}+\beta^{2}Aw_{0}$.

We now state the global well-posedness result for the data assimilation system (\ref{MLalphaCDA}).

\begin{theorem}[Global well-posedness]\label{aaa1} 
Let $T>0$, and consider $f\in L^{\infty}([0,T];H)$, with $u$ as the regular solution of (\ref{MLalpha}) and initial data $u_{0}\in V$. Let $w_{0}\in V$ and $\eta>0$ given. Moreover, suppose $I_{h}:\dot{H}^2{(\Omega)}\longrightarrow \dot{L}^2(\Omega)$ is the linear interpolant operator satisfying (\ref{LerayProjector}) and the following conditions are valid:
\begin{eqnarray}
 \eta\leq \frac{\nu}{2} \min \bigg\{\ds\frac{1}{c_1^2h^2},\frac{\beta^2}{c_2^2h^4}\bigg\}   \mbox{ and } \eta^2c_2^2h^4<\ds\frac{\nu^2}{4}, \label{kkkk}
\end{eqnarray}
where $c_1,c_2>0$ are given in \eqref{LerayProjector}. Under these assumptions, the continuous data assimilation system (\ref{MLalphaCDA}) has a unique solution with the regularity
\begin{equation}
 w\in C([0,T);V)\cap L^{2}([0,T];D(A))\,\,\mbox{  and  }\,\,\frac{dw}{dt}\in L^{2}([0,T];H).
\end{equation}
Finally, there exists a continuous dependence with respect to initial data $w_{0}$ in $V$-norm. 
\end{theorem}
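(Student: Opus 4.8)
The plan is to establish existence and regularity via a Galerkin scheme that exploits the dissipativity built into the nudging term, and then to obtain uniqueness and continuous dependence by a Gronwall estimate on the difference of two solutions. First we rewrite \eqref{MLalphaCDA}, applying $(I+\beta^{2}A)^{-1}$, in the equivalent form
\[
\frac{dw}{dt}+\nu Aw+(I+\beta^{2}A)^{-1}B(z,w)=(I+\beta^{2}A)^{-1}f-\eta\,\mathcal{P}(I_{h}w-I_{h}u),\qquad z=w+\beta^{2}Aw,
\]
and seek Galerkin approximations $w_{m}=P_{m}w\in H_{m}:=\mathrm{span}\{\phi_{1},\dots,\phi_{m}\}$, $z_{m}=(I+\beta^{2}A)w_{m}$, by projecting this equation with the orthogonal projector $P_{m}$. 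Since the nonlinearity is polynomial, the operators $A$, $(I+\beta^{2}A)^{-1}$ and $\mathcal{P}$ act boundedly on $H_{m}$, the interpolant $I_{h}$ is bounded, and $t\mapsto u(t)$ is continuous into $V\hookrightarrow H$, the resulting system is an ODE with locally Lipschitz right-hand side and hence has a unique local solution by Picard--Lindel\"of.

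The heart of the matter is a uniform (in $m$) a priori bound. Testing the Galerkin system against $z_{m}$ --- equivalently, testing the $z$-equation in \eqref{MLalphaCDA} against $w_{m}$ --- and using $(B(z_{m},w_{m}),w_{m})=0$ from \eqref{BilinearZera} together with the fact that $z_{m}$ is divergence free, we obtain
\begin{align*}
&\tfrac{1}{2}\tfrac{d}{dt}\bigl(|w_{m}|^{2}+\beta^{2}\|w_{m}\|^{2}\bigr)+\nu\|w_{m}\|^{2}+\nu\beta^{2}|Aw_{m}|^{2}+\eta\bigl(|w_{m}|^{2}+\beta^{2}\|w_{m}\|^{2}\bigr)\\
&\qquad=(f,w_{m})-\eta(I_{h}w_{m}-w_{m},z_{m})+\eta(I_{h}u,z_{m}).
\end{align*}
Expanding $z_{m}=w_{m}+\beta^{2}Aw_{m}$ and invoking the approximation property \eqref{LerayProjector} in the form $|I_{h}g-g|\le c_{1}h|g|+c_{2}h^{2}|Ag|$, the right-hand terms containing $w_{m}$ generate --- besides quantities absorbed by the left side --- contributions of the type $\eta\beta^{2}c_{2}h^{2}|Aw_{m}|^{2}$ and $\eta\beta^{2}c_{1}h\,|w_{m}|\,|Aw_{m}|$; the two inequalities in \eqref{kkkk} are exactly what is needed to absorb these into $\tfrac{\nu\beta^{2}}{2}|Aw_{m}|^{2}$ while retaining strictly positive dissipation. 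The remaining terms $(f,w_{m})$, $\eta(u,z_{m})$ and the $I_{h}u-u$ piece are handled by Young's inequality and leave a forcing $g(t)=C\bigl(|f|^{2}+\|u\|^{2}+|Au|^{2}\bigr)$, which lies in $L^{1}([0,T])$ by Theorem \ref{ExiUniBard} and $f\in L^{\infty}([0,T];H)$. Gronwall's inequality then bounds $w_{m}$ in $L^{\infty}([0,T];V)$ uniformly in $m$; re-integrating the dissipation bounds $w_{m}$ in $L^{2}([0,T];D(A))$, and inserting this into the equivalent equation while using \eqref{bilin6} and \eqref{aa-1} bounds $\tfrac{dw_{m}}{dt}$ in $L^{2}([0,T];H)$, again uniformly in $m$. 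In particular each Galerkin solution extends to all of $[0,T]$.

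With these uniform bounds in hand, a standard Aubin--Lions compactness argument produces a subsequence with $w_{m}\to w$ strongly in $L^{2}([0,T];V)$, weakly-$\ast$ in $L^{\infty}([0,T];V)$, weakly in $L^{2}([0,T];D(A))$, and $\tfrac{dw_{m}}{dt}\rightharpoonup\tfrac{dw}{dt}$ in $L^{2}([0,T];H)$. Testing against $L^{2}([0,T];H_{N})$ for fixed $N$ and passing to the limit --- combining the strong $L^{2}([0,T];V)$ convergence of $w_{m}$ with the weak $L^{2}([0,T];H)$ convergence of $Aw_{m}$ to control the nonlinear term $(I+\beta^{2}A)^{-1}B(z_{m},w_{m})$ --- shows that $w$ solves the equivalent equation, with $w(0)=w_{0}$ recovered from $w_{m}(0)=P_{m}w_{0}\to w_{0}$. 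Since $w\in L^{2}([0,T];D(A))$ and $\tfrac{dw}{dt}\in L^{2}([0,T];H)$, the Lions--Magenes lemma yields $w\in C([0,T);V)$.

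For uniqueness and continuous dependence, let $w^{(1)},w^{(2)}$ be two solutions for the same $u$ with data $w^{(1)}_{0},w^{(2)}_{0}$, and set $W=w^{(1)}-w^{(2)}$, $Z=W+\beta^{2}AW$, and $z^{(i)}=w^{(i)}+\beta^{2}Aw^{(i)}$. The $I_{h}u$ terms cancel in the difference equation, leaving
\[
\frac{dZ}{dt}+\nu AZ+B(Z,w^{(1)})+B(z^{(2)},W)=-\eta(I+\beta^{2}A)\mathcal{P}I_{h}W.
\]
Testing against $W$, we have $(B(z^{(2)},W),W)=0$ by \eqref{BilinearZera}, while $(B(Z,w^{(1)}),W)=-(B(Z,W),w^{(1)})$ is bounded by a Gagliardo--Nirenberg estimate of the type \eqref{bilin5}, $|(B(Z,W),w^{(1)})|\le C|Z|\,\|W\|\,\|w^{(1)}\|^{1/2}|Aw^{(1)}|^{1/2}$, together with $|Z|\le|W|+\beta^{2}|AW|$; a Young step absorbs the $|AW|^{2}$ part into $\nu\beta^{2}|AW|^{2}$ and leaves $\le C(t)\bigl(|W|^{2}+\beta^{2}\|W\|^{2}\bigr)$ with $C(t)\sim\|w^{(1)}\|\,|Aw^{(1)}|\in L^{1}([0,T])$ (using $w^{(1)}\in L^{\infty}([0,T];V)\cap L^{2}([0,T];D(A))$). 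The nudging term is treated exactly as in the a priori estimate, via \eqref{LerayProjector} and \eqref{kkkk}. Gronwall's inequality then gives $W\equiv0$ when $W(0)=0$, and in general $\|W(t)\|^{2}\le C_{T}\|W(0)\|^{2}$, which is the claimed continuous dependence in the $V$-norm. The main difficulty lies precisely in the a priori estimate: the nudging feedback and the three-dimensional nonlinearity must be controlled simultaneously while keeping the viscous dissipation strictly positive, and this is exactly where \eqref{kkkk} enters --- the first inequality balances $\eta$ against $\nu$ at the scale $h$, the second secures $\eta c_{2}h^{2}<\nu/2$ so that the $|Aw|^{2}$ error produced by the $\beta^{2}A$ factor in the feedback can be absorbed; the three-dimensional nonlinearity itself is tamed by the smoothing operator $(I+\beta^{2}A)^{-1}$, as quantified in \eqref{bilin6}.
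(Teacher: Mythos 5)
Your proposal is correct and follows essentially the same route as the paper: a Galerkin scheme for the equation obtained by applying $(I+\beta^{2}A)^{-1}$, an energy identity for $|w_m|^2+\beta^2\|w_m\|^2$ obtained by testing against $z_m=(I+\beta^2A)w_m$ with the nonlinearity killed by \eqref{BilinearZera}, absorption of the nudging terms via \eqref{LerayProjector} and \eqref{kkkk}, Aubin--Lions compactness, and a Gronwall argument on the difference of two solutions for uniqueness and continuous dependence. Your two-term splitting of the nonlinearity difference reduces, after \eqref{BilinearZera}, to the same single surviving term $(B(Z,\tilde w),W)$ that the paper estimates, so the uniqueness step is equivalent up to the choice of Young constants.
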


\begin{proof}
First, we apply the bounded operator $(I+\beta^2A)^{-1}$ in \eqref{MLalphaCDA} and obtain
\begin{equation}\label{EqOp}
 \ds\frac{dw}{dt}+\nu Aw+(I+\beta^2A)^{-1}B(z, w)=(I+\beta^2A)^{-1}f-\eta\mathcal{P}(I_{h} w-I_{h} u).   
\end{equation}

Note that proving the existence of solutions to \eqref{EqOp} is equivalent to proving the existence of solutions to \eqref{MLalphaCDA}. They can be obtained via the standard Galerkin procedure, using a basis of eigenfunctions with properties (\ref{01}). We denote
$$F(t)=(I+\beta^2A)^{-1}f(t)+\eta\mathcal{P}I_{h}u(t).$$
Since $u\in L^{2}([0,T];D(A))$ and property (\ref{LerayProjector}) is valid, we have $F\in L^{2}([0,T];H)$. Consider the linear spanned space $H_{m}=[\phi_{1},...,\phi_{m}]$, the projection operators $P_{m}:H \longrightarrow H_{m}$, and the approximated problem 
\begin{equation}\label{22p}
\left\{
\begin{array}{l}
\ds\frac{dw_{m}}{dt}+P_{m}(I+\beta^2A)^{-1}B(z_{m},w_{m})+\nu A w_{m} 
= P_{m}F-\eta P_{m}\mathcal{P}I_{h}(w_{m}), \\
w_{m}(0)=P_{m}(w_{0}), 
\end{array}
\right. 
\end{equation}%
 where $z_{m}=w_{m}+\beta^{2}Aw_{m}$ and $\displaystyle w_{m}(x,t)=\sum_{j=1}^{m}g_{j,m}(t)\phi_{j}(x)$. By applying the classical ODE theory, we can obtain the existence and uniqueness over a short time $[0,T_{m})$. Subsequently, we prove now uniform bounds for $w_{m}$ independently of $m$, which guarantees existence in time  $[0,T)$ of each $w_{m}$. Taking the dual action $D(A)'$ on $w_m$ in \eqref{22p}, we obtain

 \begin{align}\label{1D}
    &\frac{1}{2} \frac{d}{dt} |w_m|^2 + \nu \|w_m\|^2 
    + \langle P_{m}(I + \beta^2 A)^{-1} B(z_{m}, w_{m}), w_m \rangle_{D(A)'} \\ \nonumber
    =&(P_m F, w_m) - \eta (\mathcal{P} I_h w_m, w_m).
\end{align}
 
Moreover, taking the $L^2$-inner product with $\beta^2 Aw_m$ in \eqref{22p}, we have
 \begin{align}\label{2DD}
     &\ds\frac{1}{2}\frac{d}{dt}\beta^2\|w_m\|^2+\nu\beta^2|Aw_m|^2+( P_{m}(I+\beta^2A)^{-1}B(z_{m},w_{m}),\beta^2Aw_m)\\\nonumber
     =&(P_mF,\beta^2Aw_m)-\eta(\mathcal{P}I_hw_m,\beta^2Aw_m).
 \end{align}
 Adding \eqref{1D} and \eqref{2DD}, we obtain
 \begin{align*}
&\frac{1}{2}\frac{d}{dt}\left(|w_{m}|^{2}+\beta^{2}\|w_{m}\|^{2}\right)+\nu \left(\|w_{m}\|^{2}+\beta^{2}|Aw_{m}|^{2}\right)\\
=&(P_mF,w_{m}+\beta^2Aw_m)-\eta (I_{h}(w_{m}),w_{m}+\beta^2Aw_m),
\end{align*}
where we use the self-adjointness of $(I+\beta^2A)$ operator, the symmetry of $\mathcal{P}$, and property \eqref{BilinearZera}. Applying Hölder, \eqref{LerayProjector}, and Young's inequalities, it yields
$$\begin{aligned}
& \frac{1}{2} \frac{d}{d t}\left(\left|w_m\right|^2+\beta^2\left\|w_m\right\|^2\right)+\nu\left(\left\|w_m\right\|^2+\beta^2\left|A w_m\right|^2\right) \\
\leqslant&\left(\frac{1}{\eta}+\frac{\beta^2}{\nu}\right)|F|^2+\frac{\eta}{4}\left|w_m\right|^2+\frac{\nu}{4} \beta^2\left|A w_m\right|^2 \\
&+\frac{\eta}{4}\left|w_m\right|^2+\eta c_1^2 h^2\left\|w_m\right\|^2-\eta\left|w_m\right|^2 \\
&+\frac{\eta}{2} \beta^2\left\|w_m\right\|^2+\frac{\eta c_1^2 h^2}{2} \beta^2\left|A w_m\right|^2-\eta \beta^2\left\|w_m\right\|^2.
\end{aligned}$$
Moreover, using the hypothesis that $h$ is sufficiently small such that $\eta c_1^2h^2<\ds\frac{\nu}{2}$, we have
\begin{align}\label{2C}
    &\frac{1}{2} \frac{d}{d t}(\left|w_m\right|^2+\beta^2\left\|w_m\right\|^2)+\ds\frac{\nu}{2}\left(\left\|w_m\right\|^2+\beta^2\left|A w_m\right|^2\right)\nonumber\\
     \leqslant &\left(\ds\frac{1}{\eta}+\frac{\beta^2}{\nu}\right)|F|^2-\ds\frac{\eta}{2}(|w_m|^2+\beta^2\|w_m\|^2).
 \end{align}
Using Poincaré inequality and Grönwall standard inequality (see \cite{evans2022partial}) on \eqref{2C}, we have 
\begin{align}\label{2D}
&\left|w_m\right|^2+\beta^2\left\|w_m\right\|^2\\ \nonumber
 \leqslant&(|w_{m}(0)|^{2}+\beta^{2}\|w_{m}(0)\|^{2})+ 2\left(\frac{1}{\eta}+\frac{\beta^2}{\nu}\right)\ds\int_0^T|F(s)|^2ds\\ \nonumber
 =:&E_1(T),
\end{align}
for all $t\in[0,T_m)$. Therefore we have the global existence of $w_m$ in time, since the right-hand side of \eqref{2D} is bounded and the estimate is uniform in $m$ and $t$. Additionally, by integrating \eqref{2C}, we attain the following estimate:
\begin{multline} \label{2E}  
    \ds\int_{0}^{t}(\|w_{m}(s)\|^{2}+\beta^{2}|Aw_{m}(s)|^{2})ds\\
\leqslant \ds\frac{1}{\nu} (|w_{m}(0)|^{2}+\beta^{2}\|w_{m}(0)\|^{2})+\frac{2}{\nu}\left(\ds\frac{1}{\eta}+\frac{\beta^2}{\nu}\right)\ds\int_0^T|F(s)|^2ds=:E_2(T). 
\end{multline}

Thus, from estimates \eqref{2D} and \eqref{2E}, we conclude that
\begin{align}\label{lbounded}
 &\|w_m\|^2_{L^{\infty}([0,T];V)}\leqslant\ds \frac{E_1(T)}{\beta^2},\,  
 \|z_m\|^2_{L^{\infty}([0,T];V')}\leqslant E_1(T),\\ \nonumber
%\end{align}
%\begin{align}\label{lbounded}
&\|w_m\|^2_{L^2([0,T];V)}\leqslant E_2(T), \, 
\|w_m\|^2_{L^2([0,T];D(A))}\leqslant\ds \frac{E_2(T)}{\beta^2},\\ \nonumber
& \|z_m\|^2_{L^{2}([0,T];H)}\leqslant E_2(T).    
\end{align}
To apply the Aubin-Lion Theorem \cite{Lions69}, we first establish uniform estimates in $m$ for $\frac{dw_{m}}{dt}$. We now revisit the equivalent equation
\begin{align*}
 &\frac{d}{d t}(w_m(t)+\beta^2Aw_m(t))+v A(w_m(t)+\beta^2Aw_m(t))+P_mB\left(z_m, w_m\right)\\
 =&P_m f+\mathcal{P} I_h(u(t))+\beta^2P_m A \mathcal{P} I_h(u(t))\\
&-\eta P_m \mathcal{P} I_h\left(w_m(t)\right) -\eta \beta^2 P_m A \mathcal{P} I_h\left(w_m(t)\right).
\end{align*}
Since $D(A)= \dot{V}_{2}$ and its dual is $\dot{V}_{-2}$, using Gagliardo-Nirenberg inequality (\ref{gn}), from equality above we have
\begin{eqnarray}
\left<\frac{d}{dt}(w_{m}+ \beta^{2}Aw_m),g\right>_{\dot{V}_{-2}, \dot{V}_{2}} &\leqslant& 
C\psi_{m} |Ag|,  \nonumber
\end{eqnarray}%
 for all $g\in D(A)$, where 
 \begin{eqnarray}
\psi_{m}:= &\nu\beta^{2}|Aw_{m}|+\nu|w_{m}|+\frac{1}{\lambda_{1}^{\frac{1}{4}}}\|w_{m}\|(|w_{m}|+|Aw_{m}|)+\ds\frac{P_mf}{\lambda_1}\nonumber\\
&+ \frac{|P_m\mathcal{P}(I_hu)|}{\lambda_1}+\eta\beta^2|\mathcal{P}(I_hu)|+|\mathcal{P}I_{h}w_{m}|+\eta\beta^2|P_m\mathcal{P}I_hw_m|.     
 \end{eqnarray}

Note that $\|\psi_{m}\|_{L^{2}([0,T];H)}$ is bounded uniformly in $m$ by \eqref{lbounded}. Therefore, we conclude that $\frac{d}{dt}z_{m}= \frac{d}{dt}(w_{m}+ \beta^{2}Aw_m)$ is bounded uniformly in ${L^{2}([0,T];D(A))}$. Using (\ref{aa-1}), we also have that $\frac{d}{dt}w_{m}$ is bounded uniformly in ${L^{2}([0,T];H)}$. By applying the Aubin-Lions compactness theorem and Banach-Alaoglu Theorem, we obtain a subsequence of the approximated solutions, denoted by $\{w_{m}\}_{m \in \mathbb{N}}$, such that
\begin{eqnarray}
&&w_{m}\rightarrow w \,\mbox{ weakly in }\, L^{2}([0,T],D(A)),\nonumber\\
&&w_{m}\rightarrow w \,\mbox{ strongly in }\, L^{2}([0,T];V),\nonumber\\
&&\frac{dw_{m}}{dt}\rightarrow \frac{dw
}{dt}\,\mbox{ weakly in }\, L^{2}([0,T],H).\nonumber
\end{eqnarray}
Moreover, for the non-filtered velocity, we have
\begin{eqnarray}
&&z_{m}\rightarrow z \,\mbox{ weakly in }\, L^{2}([0,T],H),\nonumber\\
&&z_{m}\rightarrow z \,\mbox{ strongly in }\, L^{2}([0,T];\dot{V}_{-1}),\nonumber\\
&&\frac{dz_{m}}{dt}\rightarrow \frac{dz}{dt}\,\mbox{ weakly in }\, L^{2}([0,T],\dot{V}_{-2}).\nonumber
\end{eqnarray}
Now, it is straightforward to pass the weak limit in (\ref{22p}) and conclude that $w$ is a solution of \eqref{MLalphaCDA}.

Next, we prove the continuous dependence of solutions on the initial data, which implies the uniqueness of solutions.
Let $w$ and $\tilde{w}$ be two solutions of \eqref{MLalphaCDA}, and denote $W= w- \tilde{w}$ and $Z= z- \tilde{z}$. Subtracting the equations, we get
\begin{align*}
&\frac{d}{dt}(W+\beta^{2}AW)+\nu A(W+\beta^{2}AW)+B(z,w)-B(\tilde{z},\tilde{w})\\
=&-\eta\mathcal{P}I_{h}W-\eta\beta^2A\mathcal{P}(I_hW). 
\end{align*}
Since $B(z,w)-B(\tilde{z},\tilde{w})=B(Z,W)+B(\tilde{z},W)+B(Z,\tilde{w})$, we have
\begin{align}\label{PIs}
 &\frac{1}{2}\frac{d}{dt}(|W|^{2}+\beta^{2}\|W\|^{2})+\nu(\|W\|^{2}+\beta^{2}|AW|^{2})+   (B(Z,\tilde{w}),W)\\ \nonumber
=&-\eta(\mathcal{P}I_{h}W,W)-\eta\beta^2(\mathcal{P}I_hW,AW).
\end{align}

Using the Gagliardo-Nirenberg inequality (\ref{gn}) and Young inequality, we have
\begin{align}\label{BZ}
&(B(Z,\tilde{w}),W)\\ \nonumber
=&(B(W,\tilde{w}),W)+\beta^2(B(AW,\tilde{w}),W)\\ \nonumber
\leqslant& c|W|\,\|\tilde{w}\|\,\|W\|^{\frac{1}{2}}|AW|^{\frac{1}{2}}+c\beta^2|AW|\,\|\tilde{w}\|_{L^3}\|w\|_{L^6}\\ \nonumber
\leqslant&\ds\frac{\nu\beta^2}{8}|AW|^2+\ds\frac{2c^2}{\lambda_1^{1/2}\nu\beta^2}\|\tilde{w}\|^2|W|^2+\ds\frac{\nu\beta^2}{8}|AW|^2+\ds\frac{2c^2\beta^2}{\nu\lambda_1^{1/2}}\|\tilde{w}\|^2\|W\|^2. 
\end{align}

From (\ref{LerayProjector}) and given hypothesis $\eta c_{1}^2h^{2} < \ds\frac{\nu}{2}$ and $\eta c_{2}^2h^{4} < \ds\frac{\nu\beta^{2}}{2}$, we have 
\begin{align}\label{IHW}
&-\eta(\mathcal{P}I_{h}W,W)\\
=& -\eta(\mathcal{P}I_{h}W-W,W) - \eta|W|^{2}\leqslant \frac{\eta}{2}|\mathcal{P}I_{h}W-W|^{2}  - \frac{\eta}{2} |W|^{2}  \nonumber\\
\leqslant& \frac{\eta c_{1}^2h^{2}}{2}\|W\|^{2} + \frac{\eta c_{2}^2h^{4}}{2}|A W|^{2} - \frac{\eta}{2} |W|^{2}\nonumber\\
\leqslant & \frac{\nu}{4}(\|W\|^2+\beta^2|AW|^2)-\frac{\eta}{2} |W|^{2}.
\end{align}

Moreover, from the third condition $\eta^2c_2^2h^4<\ds\frac{\nu^2}{4}$, we also obtain

\begin{eqnarray}\label{third}
    -\eta \beta^2\left(P I_h W, A W\right)&=&-\eta \beta^2\left(P I_h W-W, A W\right)-\eta\beta^2\|W\|^2 \nonumber\\
& \leqslant& \eta \beta^2\left|P I_h W-W\right||A W|-\eta \beta^2\|W\|^2\nonumber \\  &\leqslant& \frac{\nu}{4} \beta^2|A W|^2+\frac{\eta^2 \beta^2}{\nu}\left|P I_h W-W\right|^2-\eta \beta^2\|W\|^2\nonumber \\
%& \leqslant& \ds\frac{\nu}{4} \beta^2|A W|^2 + \ds\frac{\eta^2 \beta^2 c_1^2 h^2}{\nu}\|W\|^2+\ds\frac{\eta^2 \beta^2 c_2^2 h^4}{\nu}|A W|^2-\eta \beta^2\|W\|^2\nonumber\\
&\leqslant& \ds\frac{\nu}{2}\beta^2|AW|^2-\ds\frac{\eta\beta^2}{2}\|W\|^2.
\end{eqnarray} 
 Therefore, from \eqref{PIs}, \eqref{BZ}, \eqref{IHW} and \eqref{third}, we obtain
\begin{eqnarray*}
\frac{d}{dt}\left(|W|^{2} +\beta^{2}\|W\|^{2}\right)&\leqslant&\ds\frac{4c^2}{\lambda_1^{1/2}\nu\beta^2}\|\tilde{w}\|^2|W|^2+\ds\frac{4c^2\beta^2}{\nu\lambda_1^{1/2}}\|\tilde{w}\|^2\|W\|^2\\
&\leqslant & 2\bigg( \ds\frac{c^2}{\lambda_1^{1/2}\nu\beta^2}+\ds\frac{c^2}{\nu\lambda_1^{1/2}}\bigg)\|\tilde{w}\|^2(|W|^{2} +\beta^{2}\|W\|^{2}).
\end{eqnarray*}
Using the classical Gronwall inequality, we conclude that
\begin{equation}
|W(t)|^{2} +\beta^{2}\|W(t)\|^{2}\leqslant\left(|W_{0}|^{2} +\beta^{2}\|W_{0}\|^{2}\right)  \exp\bigg(4\tilde{C}\ds\int_{0}^{t}\|\tilde{w}(s)\|^{2}ds\bigg),  \nonumber
\end{equation}
where $\tilde{C}=\ds\frac{c^2}{\lambda_1^{1/2}\nu\beta^2}+\ds\frac{c^2}{\nu\lambda_1^{1/2}}$.

Due to the regularity of $\tilde{w}$, we can establish the continuous dependence of the regular solution.
\end{proof}

\subsection{Long-time error analysis}
  In the following theorem, we prove that, under suitable conditions on the parameters of the systems \eqref{MLalpha} and \eqref{MLalphaCDA},  the approximate solution $w(t)$ can be used to recover the original ML-$\alpha$ solution, with an error controlled by the difference of $\alpha$ and $\beta$.   

\begin{theorem}\label{teoMLalpha}
Let $u_0,w_0\in V$ and $u$ and $w$ be solutions to systems \eqref{MLalpha} and \eqref{MLalphaCDA}, respectively, with initial data $u_0$ and $w_0$ in $V$. Assume that the following conditions are met:
\begin{enumerate}
    \item $ \ds\frac{3 \eta}{4}-\frac{15^{3} M_{1}^{2}c^4}{2^{11} \nu^{3} \alpha^{4}}>0, $
    \item $ \eta c_{1}^2 h^{2}+\ds\frac{5 \eta^{2} \beta^{2}}{2 \nu} c_{1}^2 h^{2}+\frac{30^{3} c^{4} M_{1}^{2}}{4^{4} \nu^{3} \alpha^{4}}-\eta \beta^{2}<\frac{\nu}{4}, $
    \item $ \eta c_{2}^2 h^{4}+\ds\frac{5 c_{2}^2 \eta^{2} \beta^{2} h^{4}}{2 \nu}<\frac{\nu \beta^{2}}{4}, $
\end{enumerate}
where $c$ is given in \eqref{gn}, $M_{1}$ in \eqref{M1}, and $c_1$ and $c_2>0$ in \eqref{LerayProjector}. Then, the following inequality for the difference between the physical and assimilated solutions, denoted by $g(t):=w(t)-u(t)$, holds for all  $t \geqslant 0$:
$$
|g(t)|^{2}+\beta^{2}\|g(t)\|^{2} \leqslant e^{-\frac{\lambda \nu t}{2}}\left(|g(0)|^{2}+\beta^{2}\|g(0)\|^{2}\right)+M_{\alpha}\left(\frac{e}{e^{1 / 2}-1}\right),
$$
where $M_{\alpha}$ is defined in \eqref{M2}.
\end{theorem}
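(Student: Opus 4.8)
The plan is to derive an energy inequality for $g(t) := w(t) - u(t)$ by subtracting the ML-$\alpha$ system \eqref{MLalpha} from the assimilated system \eqref{MLalphaCDA}, but with the important twist that the two systems use different lengthscale parameters ($\alpha$ versus $\beta$). So I would first write $v = u + \alpha^2 A u$ and $z = w + \beta^2 A w$, and introduce the auxiliary quantity $\tilde v := u + \beta^2 A u$ so that $z - \tilde v = g + \beta^2 A g$; this is the object whose $L^2$-energy (i.e. $|g|^2 + \beta^2\|g\|^2$) I want to control. The discrepancy $v - \tilde v = (\alpha^2 - \beta^2) A u$ will appear as a source term and is precisely what produces the $M_\alpha$ term depending on $\alpha - \beta$. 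Concretely, I would form $\frac{d}{dt}(g + \beta^2 A g)$ from the two equations; the viscous term gives $\nu A(g + \beta^2 A g)$, the nudging term gives $-\eta(I + \beta^2 A)\mathcal P(I_h w - I_h u) = -\eta(I+\beta^2A)\mathcal P I_h g$, the forcing cancels, and the nonlinear terms give $B(z,w) - B(v,u)$, which I would decompose using bilinearity as $B(z - \tilde v, w) + B(\tilde v, g) + B((\alpha^2-\beta^2)Au, u)$ after rewriting $B(v,u) = B(\tilde v, u) + B((\alpha^2-\beta^2)Au, u)$.

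Next I would take the $\dot V_{-2}$–$\dot V_2$ dual pairing of this equation against $g$ and separately the $L^2$ inner product against $\beta^2 A g$, then add, exactly mirroring the structure of \eqref{1D}–\eqref{2DD} in the proof of Theorem \ref{aaa1}. This yields
\begin{align*}
\frac{1}{2}\frac{d}{dt}\left(|g|^2 + \beta^2\|g\|^2\right) + \nu\left(\|g\|^2 + \beta^2|Ag|^2\right) + (B(g+\beta^2Ag, w), g) + (B(u, g), g)
\end{align*}
plus the cross term $(\alpha^2-\beta^2)(B(Au,u), g)$ on the left, balanced by $-\eta(\mathcal P I_h g, g) - \eta\beta^2(\mathcal P I_h g, Ag)$ on the right. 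The term $(B(u,g),g) = 0$ by \eqref{BilinearZera}. The nudging terms are handled exactly as in \eqref{IHW}–\eqref{third}, using \eqref{LerayProjector}, Young's inequality, and the smallness of $\eta c_1^2 h^2$, $\eta c_2^2 h^4$, $\eta^2 c_2^2 h^4$, producing the terms $\frac{\nu}{4}(\|g\|^2 + \beta^2|Ag|^2) - \frac{\eta}{2}|g|^2$ and $\frac{\nu}{2}\beta^2|Ag|^2 - \frac{\eta\beta^2}{2}\|g\|^2$ together with the $c_1^2 h^2$, $c_2^2 h^4$ contributions that appear in conditions (2) and (3). For the nonlinear term $(B(g+\beta^2Ag, w), g)$, I would split it as $(B(g,w),g) + \beta^2(B(Ag,w),g)$ and bound each using \eqref{bilin4}, \eqref{bilin5}, the Gagliardo–Nirenberg inequalities \eqref{gn}, and Young's inequality, absorbing the top-order $|Ag|$ factors into $\nu\beta^2|Ag|^2$ at the cost of lower-order terms of the form $\frac{C M_1}{\nu\alpha^4}\|w\|_{?}\,(\cdots)$ — these are what generate the $\frac{M_1^2 c^4}{\nu^3\alpha^4}$ coefficients in conditions (1) and (2). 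Here I would use Lemma \ref{LEM1} to bound $\|u\|^2$, $\alpha^2|Au|^2$ (and hence $\|w\|$-type norms via $g = w - u$) by $M_1$. For the source term $(\alpha^2-\beta^2)(B(Au,u),g)$, I would use \eqref{bilin4} or \eqref{bilin6}-type estimates to get $|(\alpha^2-\beta^2)(B(Au,u),g)| \le \frac{\nu}{8}\|g\|^2 + C\frac{(\alpha^2-\beta^2)^2}{\nu}|Au|^2\|u\|^2$, the last factor again bounded by $M_1^2/\alpha^4$ in time-average via Lemma \ref{LEM1}; this term, being only locally integrable in time rather than uniformly bounded, is exactly why one needs the Gronwall variant of Lemma \ref{GronwallVersion} rather than the classical one.

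Collecting everything, conditions (1), (2), (3) are precisely the algebraic statements that after absorbing all the $\|g\|^2$, $\beta^2|Ag|^2$, and $|g|^2$ terms onto the left-hand side, one is left with a clean differential inequality of the form
\begin{equation*}
\frac{d}{dt}\left(|g|^2 + \beta^2\|g\|^2\right) + \lambda\nu\left(|g|^2 + \beta^2\|g\|^2\right) \le \beta(t),
\end{equation*}
for some $\lambda > 0$ (coming from Poincaré's inequality applied to the remaining positive dissipation), where $\beta(t)$ is a locally integrable function whose running integral $\sup_t \int_t^{t+T}\beta(s)\,ds$ is bounded by a constant $M_\alpha$ proportional to $(\alpha^2 - \beta^2)^2$ (the definition \eqref{M2} referenced in the statement), using the time-integral estimates from Lemma \ref{LEM1} and possibly Lemma \ref{LEM2}. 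Applying Lemma \ref{GronwallVersion} with $\xi(t) = |g(t)|^2 + \beta^2\|g(t)\|^2$, $C = \lambda\nu/2$, and a suitable choice of window $T$ (chosen so that $CT = 1/2$, which explains the factor $\frac{e}{e^{1/2}-1}$ in the conclusion) gives exactly the stated bound. The main obstacle I anticipate is the bookkeeping in the nonlinear estimates: getting all the constants to line up so that the coefficients appearing in conditions (1)–(3) are exactly $\frac{15^3 M_1^2 c^4}{2^{11}\nu^3\alpha^4}$, $\frac{30^3 c^4 M_1^2}{4^4\nu^3\alpha^4}$, etc., requires carefully choosing the Young's-inequality splitting parameters (the $\frac{3\eta}{4}$ in condition (1) suggests the $|g|^2$ dissipation from nudging is split into portions $\frac{\eta}{4}$ and absorbing terms), and tracking which pieces go into the dissipation versus the source; the analytic content is routine but the constant-chasing is delicate, and the handling of the $\beta^2(B(Ag,w),g)$ term — which has the top-order $|Ag|$ — is the sharpest point where one must be careful not to spend more than the available $\nu\beta^2|Ag|^2$.
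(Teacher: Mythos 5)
Your overall strategy matches the paper's: subtract the two systems, derive an energy inequality for $|g|^2+\beta^2\|g\|^2$, treat the nudging terms exactly as in the proof of Theorem \ref{aaa1}, and close with Lemma \ref{GronwallVersion} on the window $T=(\nu\lambda_1)^{-1}$ so that $CT=1/2$. But there are two concrete gaps in the execution. First, the difference equation you write down is incomplete. Subtracting \eqref{MLalpha} from \eqref{MLalphaCDA} produces $\frac{d}{dt}(z-v)+\nu A(z-v)$, and since $z-v=g+\beta^2Ag+(\beta^2-\alpha^2)Au$, the left-hand side is \emph{not} $\frac{d}{dt}(g+\beta^2Ag)+\nu A(g+\beta^2Ag)$: the extra pieces $(\beta^2-\alpha^2)\frac{d}{dt}Au$ and $\nu(\beta^2-\alpha^2)A^2u$ must be carried to the right as sources. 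After pairing with $g$ they contribute the $|u_t|^2$ and $|Au|^2$ portions of the function $\delta(t)$ whose running time-integral defines $M_\alpha$ in \eqref{M2}; the first of these is precisely why Lemma \ref{LEM2} (the bound on $\int|du/dt|^2$) is indispensable, not merely ``possibly'' needed.

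Second, and more seriously, your decomposition leaves the nonlinear term as $(B(g+\beta^2Ag,w),g)$, with the \emph{assimilated} solution $w$ in the transported slot, and you propose to control the resulting $\|w\|$-norms via $g=w-u$, i.e.\ $\|w\|\leqslant\|u\|+\|g\|$. That step is fatal for a global-in-time bound: it reinserts $\|g\|$ into the coefficient, producing after Young's inequality terms superlinear in $|g|^2+\beta^2\|g\|^2$ (e.g.\ of order $\|g\|^6$ from the $\beta^2(B(Ag,w),g)$ piece) that cannot be absorbed into the linear dissipation, so Lemma \ref{GronwallVersion} no longer applies. The hypotheses (1)--(2) involve $M_1^2/\alpha^4$, i.e.\ a bound on $\|u\|^4$ from Lemma \ref{LEM1}; this signals that the second argument of $B$ must end up being $u$, not $w$. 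The repair is one more use of \eqref{BilinearZera}: since $(B(a,g),g)=0$ for divergence-free $a$, writing $w=u+g$ reduces your term to $(B(g,u),g)+\beta^2(B(Ag,u),g)$ --- equivalently, adopt the paper's decomposition $B(z,w)-B(v,u)=B(z,g)+B(z-v,u)$, which kills $(B(z,g),g)$ outright and leaves only terms with $u$ in the second slot, each bounded via \eqref{gn} and Young with coefficients controlled by $\|u\|^4\leqslant M_1^2/\alpha^4$.
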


\begin{proof}
    
Let $g(t)=w(t)-u(t)$. By expressing
\begin{equation*}
B(z, w)-B(v, u)=B(z, g)+B(z-v, u), 
\end{equation*}
and

$$z-v=g+\beta^{2} A g+\left(\beta^{2}-\alpha^{2}\right) Au,$$
it follows that
\begin{equation}\label{B}
B(z, w)-B(v, u)=B(z, g)+B(g, u)+\beta^{2} B(A g, u)+\left(\beta^{2}-\alpha^{2}\right) B(Au, u).
\end{equation}
Subtracting \eqref{MLalpha} from \eqref{MLalphaCDA}, we have
\begin{align*}
  &\ds\frac{d}{dt} [g+\beta^{2} A g +(\beta^{2}-\alpha^{2}) Au]+\nu A\left[g+\beta^{2} Ag+(\beta^{2}-\alpha^{2}\right) Au] \\
     & +B(z, w)-B(v, u)\\
     =&-\eta\left(I+\beta^{2} A\right) I_h g.
\end{align*}

Taking the dual action $\langle \cdot, g\rangle_{D(A)'}$ and using \eqref{BilinearZera} and \eqref{B}, we get

\begin{align*}
 &\frac{1}{2} \frac{d}{dt}\big(|g|^{2}+\beta^{2}\left\|g\right\|^{2}\big)+\nu\left(\left\|g\right\|^{2}+\beta^{2}|Ag|^{2}\right)\\
 =&\left(\alpha^{2}-\beta^{2}\right)\left\langle\frac{d}{d t} A u, g\right\rangle
 -\nu\left(\beta^{2}-\alpha^{2}\right)(Au, A g)-(B(g, u), g)\\
 &-\beta^{2}(B(A g, u), g)
 -\left(\beta^{2}-\alpha^{2}\right)(B(A u, u), g)-\eta\left(I_{h} g, g\right)-\eta \beta^{2}\left(I_h g, Ag\right),
\end{align*}

where we use $\left\langle A P I_{h} g, g\right\rangle_{D(A)'}=\left(I_{h} g, A g\right)$. Thus, 

\begin{align}\label{PI}
 &\ds\frac{1}{2} \frac{d}{d t}\big(|g|^{2}+\beta^{2}\|g\|^{2}\big)+\nu\left(\left\|g \right\|^{2}+\beta^{2}|Ag|^{2}\right)\nonumber \\
\leqslant &\left | \beta^2-\alpha^2 \right | \left | \left \langle \ds\frac{d}{dt} Au, g \right \rangle  \right | +\nu \left | \beta^2-\alpha^2 \right | | Au |\, | Ag | +\left | (B(g,u),g) \right |  \nonumber\\
& +\beta^{2}|(B(A g, u), g)|+\left|\beta^{2}-\alpha^{2}\right|\, |B(A u, u), g\big)| \\
& -\eta\left(I_{h} g-g, g\right)-\eta|g|^{2}-\eta \beta^{2}\left(I_{h} g-g, A g\right)-\eta \beta^{2}\left(g, Ag\right). \nonumber
\end{align}
Now, we estimate the right-hand side terms in \eqref{PI}:

\begin{enumerate}[label=\roman*)]
    \item 
$$\begin{aligned}
\left|\beta^{2}-\alpha^{2}\right|\left | \left \langle \frac{d}{dt} Au,g \right \rangle \right | 
& \leqslant \left|\beta^{2}-\alpha^{2}\right|\bigg|\ds\frac{du}{dt}\bigg|\,|Ag| \\
& \leqslant \frac{\nu \beta^{2}}{10}|Ag|^{2}+\frac{5}{2 \nu \beta^{2}}\left|\beta^{2}-\alpha^{2}\right|^{2}| u_{t}|^{2},
\end{aligned}$$

\item 
$$\begin{aligned}
\nu|\beta^{2}-\alpha^{2}|Au|\,|Ag| & =\sqrt{\nu}|Ag| \sqrt{\nu}|\beta^{2}-\alpha^{2}|Au| \\
& \leqslant \frac{\nu \beta^{2}}{10} |Ag|^{2}+\frac{5\nu}{2 \beta^{2}} \left|\beta^{2}-\alpha^{2}\right|^{2}|A u|^{2},
\end{aligned}
$$
\item $$
\begin{aligned}
\left|(B(g,u), g)\right|
&\leqslant \|g \|_{L^4}\| u\|\,|g|_{L^4}=\|g\|_{L^4}^{2}\| u\| \\
&\leqslant c|g|^{1 / 2}\left\| g\right\|^{3 / 2}\left\| u\right\| \\
&\leqslant \frac{2 \nu}{5}\|g\|^{2}+c^{4} \cdot \frac{15^{3}}{2^{11} \nu^{3}}\| u\|^{4}|g|^{2},
\end{aligned}
$$
\item $$
\begin{aligned}
\beta^{2}|(B(A g, u), g)| 
& \leqslant c \beta^{2}|A g|\,\| u\|\,\|g\|_{L^\infty} \\
& \leqslant c \beta^{2}|A g|\, \| u\|\|g\|^{1/2}|Ag|^{1/2}  \\
& =c \beta^{2}|A g|^{3 / 2}\left\| u \right\|\|g\|^{1 / 2} \\
& \leqslant \frac{\nu \beta^{2}}{10}|Ag|^{2}+c^{4} \cdot \frac{30^{3}}{4^{4} \cdot \nu^{3}}\| u\|^{4}\left\|g\right\|^{2},
\end{aligned}
$$ 
    \item $$
\begin{aligned}
\left|\beta^{2}-\alpha^{2}\right| \left|(B(A u, u), g)\right|
& \leqslant c\left|\beta^{2}-\alpha^{2}\right||Au |\| u\|\left\|g\right\|^{1 / 2}|Ag|^{1 / 2} \\
& \leqslant \frac{\nu \beta}{5}\left\|g\right\||Ag|+\frac{5}{4 \nu \beta} c^{2}\left|\beta^{2}-\alpha^{2}\right|^{2}|Au|^{2}\| u\|^{2} \\
& \leqslant \frac{\nu}{10}\left\|g\right\|^{2}+\frac{\nu \beta^{2}}{10}|A g|^{2}+\frac{5}{4 \nu \beta} c^{2}\left|\beta^{2}-\alpha^{2}\right|^{2}|Au|^{2}\| u\|^{2},
\end{aligned}
$$
\item $$
\begin{aligned}
-\eta\left(I_{h} g-g, g\right)-\eta|g|^{2} 
& \leqslant \eta|I_{h} g-g|^{2}+\frac{\eta}{4}|g|^{2}-\eta|g|^{2} \\
& \leqslant \eta\left(c_{1}^2 h^{2}\left\|g\right\|^{2}+c_{2}^2 h^{4}|A g|^{2}\right)-\frac{3}{4} \eta|g|^{2},
\end{aligned}
$$
\item $$
\begin{aligned}
&-\eta \beta^{2}\left(I_{h} g-g, Ag\right)-\eta \beta^{2}\left(g, Ag\right) \\
& \leqslant \frac{\nu \beta^{2}}{10}|Ag|^{2}+\frac{5\eta^{2} \beta^{2} }{2 \nu}\left(c_{1}^2 h^{2}\left\|g\right\|^{2}+c_{2}^2 h^{4}|Ag|^{2}\right)-\eta \beta^{2}\left\|g\right\|^{2}.
\end{aligned}
$$
\end{enumerate}

Therefore, considering the estimates i) - vii) in \eqref{PI}, we get

$$
\begin{aligned}
&\frac{1}{2} \frac{d}{d t}\left(|g|^{2}+\beta^{2}\left\|g\right\|^{2}\right)+\left(\frac{\nu}{2}-\frac{30^{3} c^{4}}{4^{4} \nu^{3}}\|u\|^{4}-\eta c_{1}^2 h^{2}-\frac{5 \eta^{2} \beta^{2}}{2 \nu} c_{1}^2 h^{2}+\eta \beta^{2}\right)\|g\|^{2} \\
& +\left(\frac{\nu \beta^{2}}{2}-\eta c_{2}^2 h^{4}-\frac{5 \eta^{2} \beta^{2} c_{2}^2 h^{4}}{2 \nu}\right)|Ag|^{2}  +\left(\frac{3 \eta}{4}-\frac{15^{3}c^4}{2^{11} \nu^{3}}\|u\|^{4}\right)|g|^{2} \\
\leqslant &\frac{5}{2 \nu \beta^{2}}\left|\beta^{2}-\alpha^{2}\right|^{2}| u_t|^2 +\frac{5\nu}{2 \beta^{2}}\left|\beta^{2}-\alpha^{2}\right|^{2}| Au |^2 +\frac{5 c^2}{4 \nu \beta^{2}}\left|\beta^{2}-\alpha^{2}\right|^{2}| Au|^2 \| u  \| ^2.
\end{aligned}
$$

We require in hypothesis $\eta>>1$ and $h<<1$ such that
\begin{enumerate}
    \item $ \ds\frac{3 \eta}{4}-\frac{15^{3} M_{1}^{2}c^4}{2^{11} \nu^{3} \alpha^{4}}>0, $
    \item $ \eta c_{1}^2 h^{2}+\ds\frac{5 \eta^{2} \beta^{2}}{2 \nu} c_{1}^2 h^{2}+\frac{30^{3} c^{4} M_{1}^{2}}{4^{4} \nu^{3} \alpha^{4}}-\eta \beta^{2}<\frac{\nu}{4}, $
    \item $ \eta c_{2}^2 h^{4}+\ds\frac{5 c_{2}^2 \eta^{2} \beta^{2} h^{4}}{2 \nu}<\frac{\nu \beta^{2}}{4}, $
\end{enumerate}

and from Poincare's inequality and conditions above, we obtain

\begin{align}\label{Grn}
&\frac{1}{2} \frac{d}{d t}\big(|g|^{2}+\beta^{2}\|g\|^{2}\big)+\ds\frac{\nu \lambda_{1}}{4}\left(|g|^{2}+\beta^{2}\|g\|^{2}\right) \nonumber\\
\leqslant &\ds\frac{5}{2 \nu \beta^{2}}\left|\beta^{2}-\alpha^{2}\right|^{2}|u_{t}|^{2}+\frac{5 \nu}{2 \beta^{2}}\left|\beta^{2}-\alpha^{2}\right|^{2}|Au|^{2}+\ds\frac{5 c^{2}}{4 \nu \beta}\left|\beta^{2}-\alpha^{2}\right|^{2}|Au|^{2}\|u\|^{2}.
\end{align}

Denoting
$$ \xi(t)=|g(t)|^{2}+\beta^{2}\|g(t)\|^{2}, \; \gamma=\frac{\nu \lambda_{1}}{2},$$
$$\delta (t)=\frac{5\left|\beta^{2}-\alpha^{2}\right|^{2}}{\beta}\left(\frac{1}{\nu \beta}|u_{t}|^{2}+\frac{\nu}{\beta}|Au|^{2}+\frac{c^{2}}{2 \nu}|Au|^{2}\|u\|^{2}\right),$$
and applying \eqref{Grn} for all $t \geqslant t_0$, we have

$$
\frac{d}{d t} \xi(t)+\gamma \xi(t) \leqslant \delta(t).
$$

Moreover, from Lemmas \ref{LEM1} and \ref{LEM2}, we obtain

$$
\begin{aligned}
&\int_{t}^{t+T} \delta(s)ds\\
=&\frac{5\left|\beta^{2}-\alpha^{2}\right|^{2}}{\beta}\Bigg(\frac{1}{\nu \beta} \int_{t}^{t+T}\left|\frac{du}{dt}(s)\right|^{2} ds+\frac{\nu}{\beta} \int_{t}^{t+T}|Au(s)|^{2} ds\\
& +\frac{c^{2}}{2 \nu} \int_{t}^{t+T}|Au(s)|^{2}\left\| u(s)\right\|^{2} ds\Bigg)\\
\leqslant &\frac{5\left|\beta^{2}-\alpha^{2}\right|^{2}}{\beta} \left[\frac{1}{\nu \beta} \left(\frac{6 M_{1}}{\nu \alpha^{2}}+\frac{6T}{\nu^{2} \lambda_{1} \alpha^{2}} \sup_{s \geqslant 0} |f(s)|^{2} \right) \left(\nu^2 +\frac{c^2 M_{1}}{16 \pi^{4} \lambda_{1}^{1 / 2} \alpha^{2}} \right) \right.\\
& +\frac{1}{\nu \beta}\left(\frac{3 c^{2}}{16 \pi^{4} \alpha^{4} \lambda_{1}^{1 / 2}} \cdot \frac{M_{1}}{\alpha^{2}} \cdot M_{1}\right) T+\frac{3}{\nu \beta} T \sup_{s \geqslant 0}|f(s)|^2 \\
& \left.+\frac{\nu}{\beta}\left(\frac{M_{1}}{\nu \alpha^{2}}+\frac{T}{\nu^{2} \alpha^{2} \lambda_{1}} \sup_{s \geqslant 0}|f(s)|^{2}\right)+\frac{c^{2}}{2 \nu} \frac{M_{1}}{\alpha^{2}}\left(\frac{M_{1}}{\nu \alpha^{2}}+\frac{T}{\lambda_{1}{\nu }^{2} \alpha^{2}} \sup_{s \geqslant 0}  |f(s)|^{2}\right)\right].
\end{aligned}
$$
Choosing $T=\left(\nu \lambda_{1}\right)^{-1}>0$, we have

\begin{align}
&\int_{t}^{t+\frac{1}{\lambda_{1} \nu }} \delta(s) ds \\
\leqslant &\ds\frac{5\left|\beta^{2}-\alpha^{2}\right|^{2}}{\beta}\left[\frac{1}{\nu \beta}\left(\frac{6 M_{1}}{\nu \alpha^{2}}+\frac{6}{\nu^{3} \lambda_{1}^{2} \alpha^{2}} \sup_{s \geqslant 0} |f(s)|^{2}\right)\left(\nu^{2}+\frac{c^2 M_{1}}{16 \pi^{4} \lambda_{1}^{1 / 2} \alpha^{2}}\right)\right. \nonumber\\
& +\ds\frac{3 c^{2} M_{1}^{2}}{16 \nu^2 \beta \pi^{4} \alpha^{6} \lambda_{1}^{3 / 2}}+\frac{3}{\nu^{2} \lambda_{1} \beta} \sup_{s \geqslant 0} | f(s) |^{2}+\frac{\nu}{\beta}\left(\frac{M_{1}}{\nu \alpha^{2}}+\frac{1}{\nu^{3} \alpha^{2} \lambda_{1}^{2}} \sup_{s \geqslant 0} |f(s)|^{2}\right) \nonumber\\
& \left.+\ds\frac{c^{2} M_{1}}{2 \nu \alpha^{2}}\left(\frac{M_{1}}{\nu \alpha^{2}}+\frac{1}{\lambda_{1}^{2} \nu^{3} \alpha^{2}} \sup_{s \geqslant 0} |f(s)|^{2}\right)\right]:=M_\alpha.\label{M2}
\end{align}

Therefore, by applying Lemma \ref{GronwallVersion}, we have the following estimate for the error between the original $\alpha$-model and the approximation $\beta$-model solutions:
$$
|g(t)|^{2}+\beta^{2}\|g(t)\|^{2} \leqslant e^{-\frac{\lambda \nu t}{2}}\left(|g(0)|^{2}+\beta^{2}\|g(0)\|^{2}\right)+M_{\alpha}\left(\frac{e}{e^{1 / 2}-1}\right),
$$
with $M_\alpha$ defined in \eqref{M2}.
\end{proof}

\section{Numerical Simulations}\label{NumSim}
In this section, we conduct numerical experiments to illustrate and verify the theoretical results on the convergence as stated in Theorem \ref{teoMLalpha}. In order to complete this task, we numerically solve the ML-$\alpha$ model in a three-dimensional domain. We use a newly developed Python package ``Dedalus", which supports symbolic entry for equations and conditions. Moreover, Dedalus utilizes spectral methods for solving partial differential equations and is particularly convenient for problems with periodic boundary conditions. Using Dedalus, we perform the following two sets of numerical simulations: one is with initial conditions without a random component to assess the impact of $\eta$ and the other one is with initial conditions with a random component to assess the impact of $\eta$. For each scenario, we have provided two types of graphical results. One presents the normalized difference between the solutions from the original system and the data assimilation system. In these graphs, a decreasing trend indicates convergence, while an increasing trend represents divergence. The other graph displays the velocity contours for both the original system and the data assimilation system at the initial and final time steps. In cases of convergence, even if the two systems start differently, their velocity contours become similar by the end. Conversely, in divergent cases, the velocity contours remain distinct. The following provide the details.

\subsection{Testing the impact of $\eta$-without random initial conditions}

Choosing a domain $\Omega=[0, 1]^3$, we have the initial conditions for the original system (\ref{MLalpha}) to be $u=(u_0,\ v_0,\ w_0):$
$$u_0=0.05\sin{2\pi xz},$$
$$v_0=0.05\sin{2\pi xy},$$
$$w_0=0.05\sin{2\pi yz}.$$
%Here, the initial conditions are with a sum of sine and cosine waves which are meant to introduce turbulence.\\

The initial conditions for the assimilated model (\ref{MLalphaCDA}) is taken to be $w=(\hat{u}_0,\ \hat{v}_0,\ \hat{w}_0):$
$$\hat{u}_0=0.05\sin{\pi x}\cos{\pi y},$$
$$\hat{v}_0=0.05\sin{\pi y}\cos{\pi z},$$
$$\hat{w}_0=0.05\sin{\pi z}\cos{\pi x}.$$
In practice, this initial condition is arbitrary and can be set to anything within the domain.

We carefully select the parameters so they satisfy the conditions for regularity in Theorem \ref{aaa1}
\begin{eqnarray}
 \eta\leq \frac{\nu}{2} \min \bigg\{\ds\frac{1}{c_1^2h^2},\frac{\beta^2}{c_2^2h^4}\bigg\}   \mbox{ and } \eta^2c_2^2h^4<\ds\frac{\nu^2}{4}, \nonumber
\end{eqnarray}

as well as check for the hypotheses given in Theorem \ref{teoMLalpha}, i.e. 
\begin{enumerate}
    \item $ \ds\frac{3 \eta}{4}-\frac{15^{3} M_{1}^{2}c^4}{2^{11} \nu^{3} \alpha^{4}}>0 $ $\Longleftrightarrow$ $ \eta>C_1 := \frac{4}{3} \cdot \frac{15^{3} M_{1}^{2}c^4}{2^{11} \nu^{3} \alpha^{4}}, $ 
    \item $ \eta c_{1}^2 h^{2}+\ds\frac{5 \eta^{2} \beta^{2}}{2 \nu} c_{1}^2 h^{2}+\frac{30^{3} c^{4} M_{1}^{2}}{4^{4} \nu^{3} \alpha^{4}}-\eta \beta^{2}<\frac{\nu}{4}, $
    \item $ \eta c_{2}^2 h^{4}+\ds\frac{5 c_{2}^2 \eta^{2} \beta^{2} h^{4}}{2 \nu}<\frac{\nu \beta^{2}}{4}. $
\end{enumerate}
Here, the constants are $c={\frac{4}{3\sqrt{3}}}^{3/4}$ \cite{Galdi}, $c_1=\sqrt{32}$, and $c_2=2$ \cite{albanez2016continuous}. We first fix $\nu=0.75$ and $\alpha=0.3$. $M_1$ depends on the initial conditions and force. Here, we take the force to be 0 and $M_1= 0.00339$, so we have $C_1=0.00739$. We then compare two scenarios: one with $\eta=1.5>C_1$
  and the other with $\eta=0.0001<C_1$. Once $\eta$ is chosen, we choose $h=0.043$ and $\beta=0.35$ so condition (\ref{kkkk}) and the last two hypotheses 2 and 3 are satisfied. %Therefore, the two cases that we compare have all the parameters the same except a difference on the $\eta$ value. 
  The graphical results on the difference are presented in Figures 1 (high $\eta$) and 4 (low $\eta$). In all these error plots, the x-axis represents time, while the y-axis shows the logarithm of the normalized difference between the solution from the original system and the data assimilation system. The velocity contours are presented in Figures 2 and 3 (high $\eta$)
  and 5 and 6 (low $\eta$). In these contour graphs, we compare the original and the assimilated systems at the beginning and at the end time steps.

%We also need to make sure that the parameters satisfy the requirements
%If $\eta$ is large enough and $h$ is small enough, such that the conditions below are satisfied:
 %   \begin{enumerate}
  %  \item $\eta> \ds\frac{c^4}{16}\left(\frac{3}{\nu}\right)^3\ds\frac{M_1^2}{\alpha^4}$\label{cond1}
   %     \item $2\eta c_1 h^2<\nu$ and $2\eta c_2h^4<\nu\beta^2$;
    %     \label{cond2}
     %      \end{enumerate}

\begin{figure}
\centering
\includegraphics[totalheight=7cm]{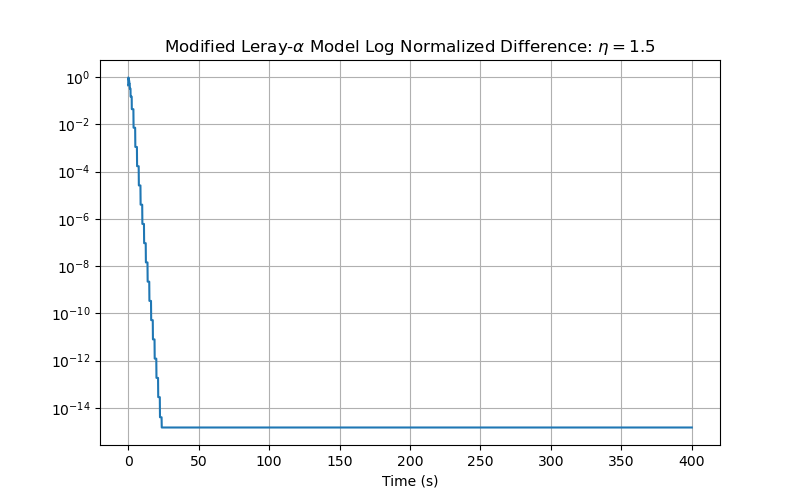}
    \caption{Error plot of modified Leray-$\alpha$ model with high $\eta$ value-without random initial conditions case.}
    \label{fig1}
\end{figure}
\begin{figure}
\centering
    \includegraphics[totalheight=6cm]{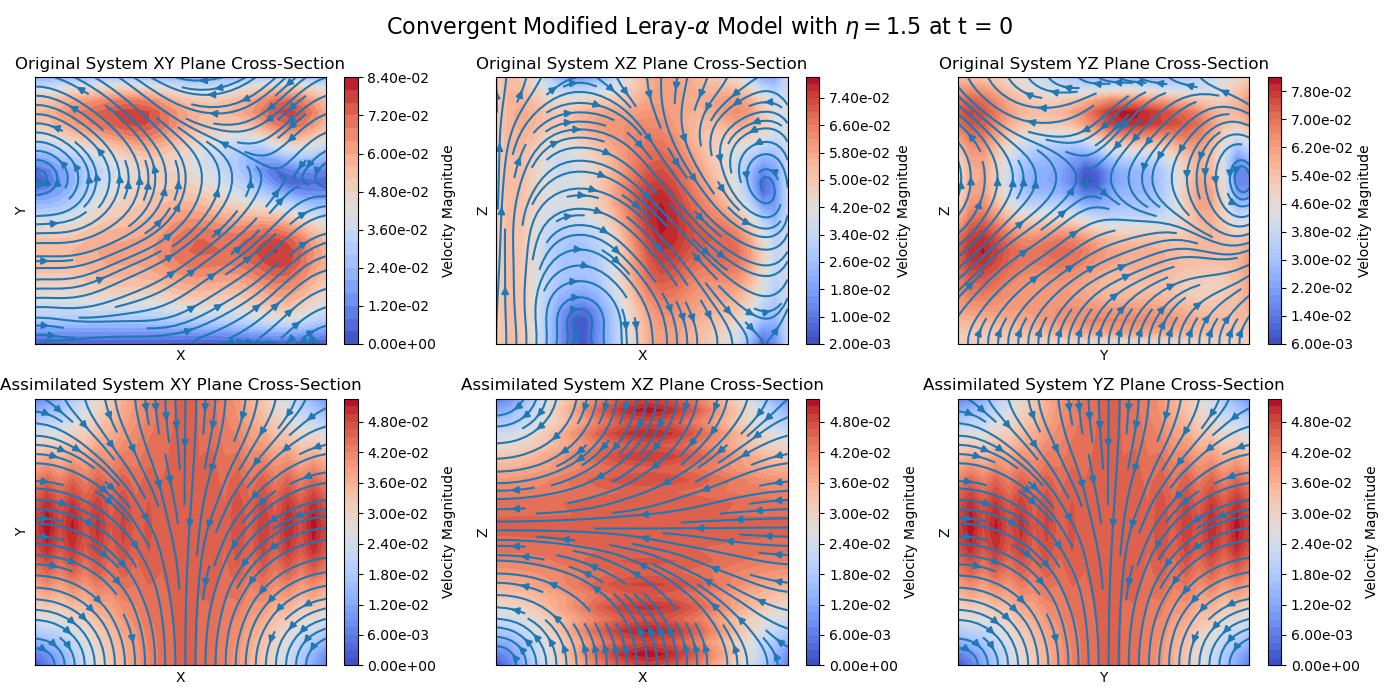}
    \caption{Velocity contour of modified Leray-$\alpha$ model with high $\eta$ value-without random initial conditions case at $t=0$.}
    \label{fig2}
\end{figure}
\begin{figure}
\centering
    \includegraphics[totalheight=6cm]{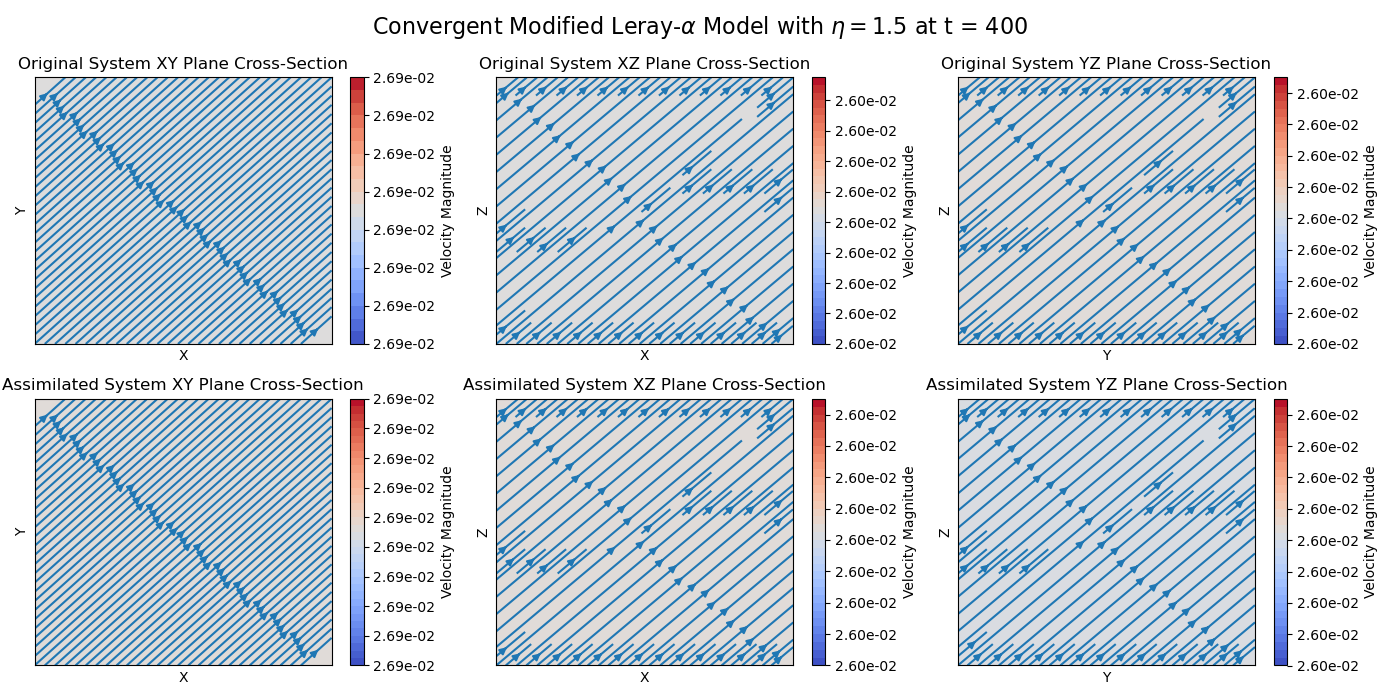}
    \caption{Velocity contour of modified Leray-$\alpha$ model with high $\eta$ value-without random initial conditions case at $t=400$.}
    \label{fig3}
\end{figure}
\begin{figure}
\centering
    \includegraphics[totalheight=7cm]{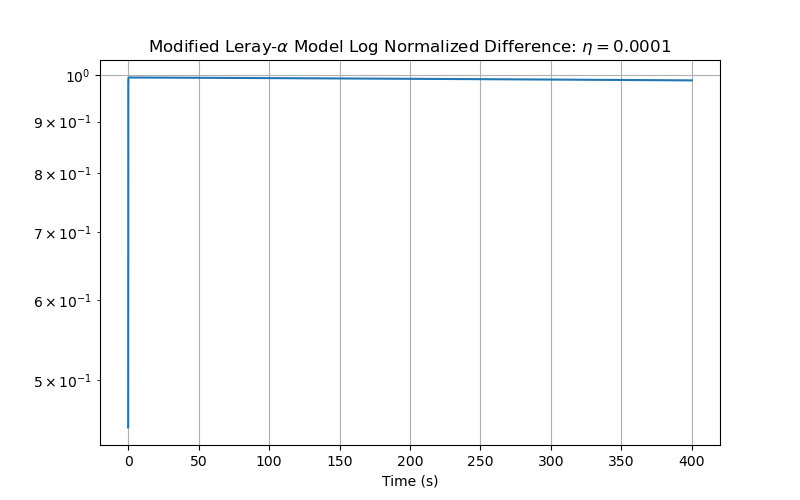}
    \caption{Error plot of modified Leray-$\alpha$ model with low $\eta$ value-without random initial conditions case.}
    \label{fig4}
\end{figure}
\begin{figure}
\centering
    \includegraphics[totalheight=6cm]{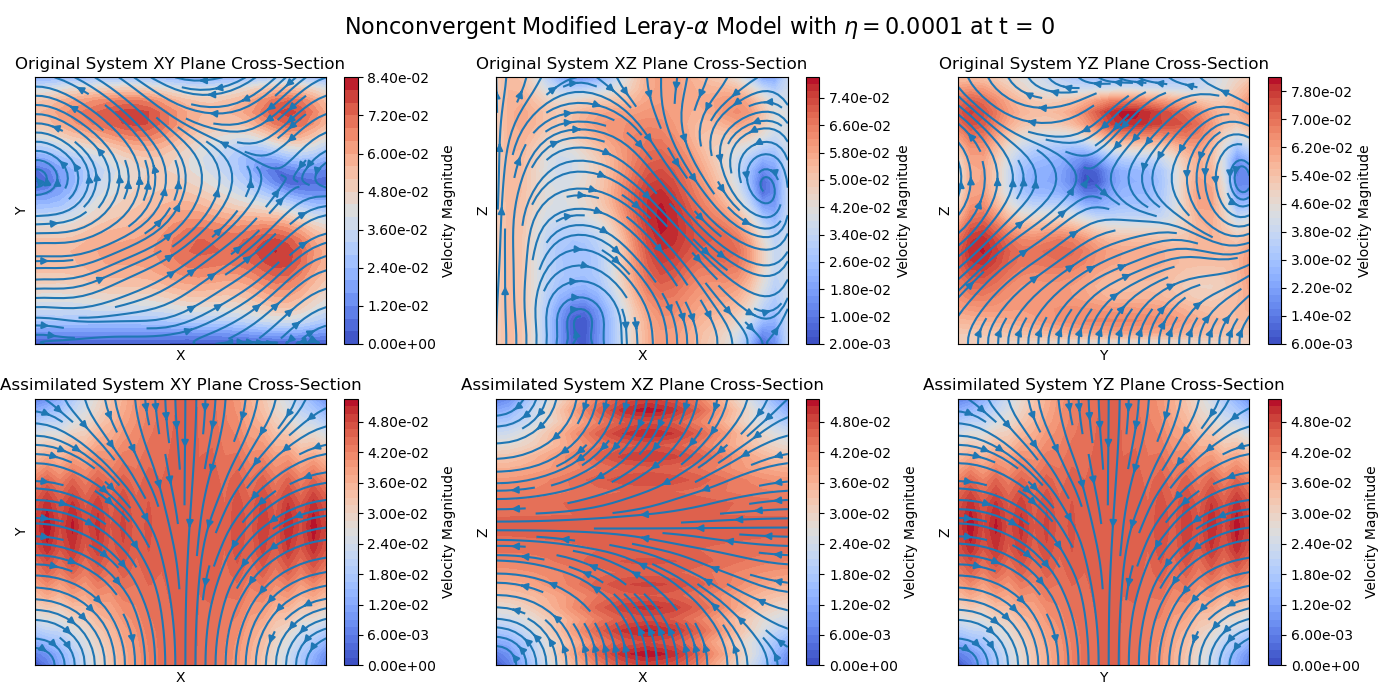}
    \caption{Velocity contour of modified Leray-$\alpha$ model with low $\eta$ value-without random initial conditions case at $t=0$.}
    \label{fig5}
\end{figure}
\begin{figure}
\centering
    \includegraphics[totalheight=6cm]{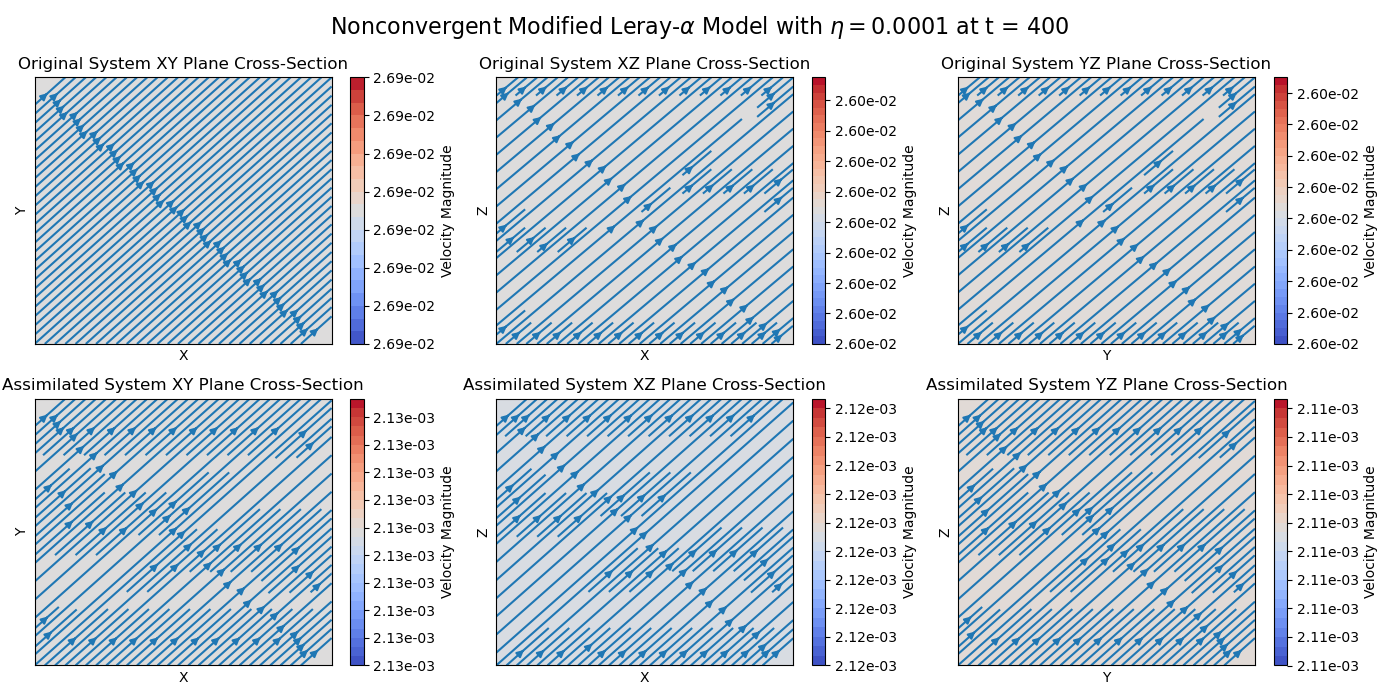}
    \caption{Velocity contour of modified Leray-$\alpha$ model with low $\eta$ value-without random initial conditions case at $t=400$.}
    \label{fig6}
\end{figure}
\subsection{Testing the impact of $\eta$-with random initial conditions}
The domain is $\Omega=[0, 1]^3$ and the initial conditions for the original system (\ref{MLalpha}) has a random component
 which is $u=(u_0,\ v_0,\ w_0):$
$$u_0=0.05\sin{2\pi xz}+0.02*X -0.01,$$
$$v_0=0.05\sin{2\pi xy}+0.02*X -0.01,$$
$$w_0=0.05\sin{2\pi yz}+0.02*X -0.01,$$
where $X$ is a random variable drawn from a uniform distribution.

%Here, the initial conditions are with a sum of sine and cosine waves which are meant to introduce turbulence.\\
The initial conditions for the assimilated model (\ref{MLalphaCDA}) is taken to be $w=(\hat{u}_0,\ \hat{v}_0,\ \hat{w}_0):$
$$\hat{u}_0=0.05\sin{\pi x}\cos{\pi y},$$
$$\hat{v}_0=0.05\sin{\pi y}\cos{\pi z},$$
$$\hat{w}_0=0.05\sin{\pi z}\cos{\pi x}.$$
Here, we have $\nu=0.75$ and $\alpha=0.3$. $M_1=0.00355$, $h=0.043$, and $\beta=0.35$. We compare the results when $\eta=1.5>C_1\approx 0.00811$ (results in 7-9) and $\eta=0.0001<C_1\approx 0.02626$ (results in 10-12). 
Note that, due to the random component in our initial conditions, each run yields a different $C_1$ value; however, these values are very close to one another.
\begin{figure}
\centering
    \includegraphics[totalheight=7cm]{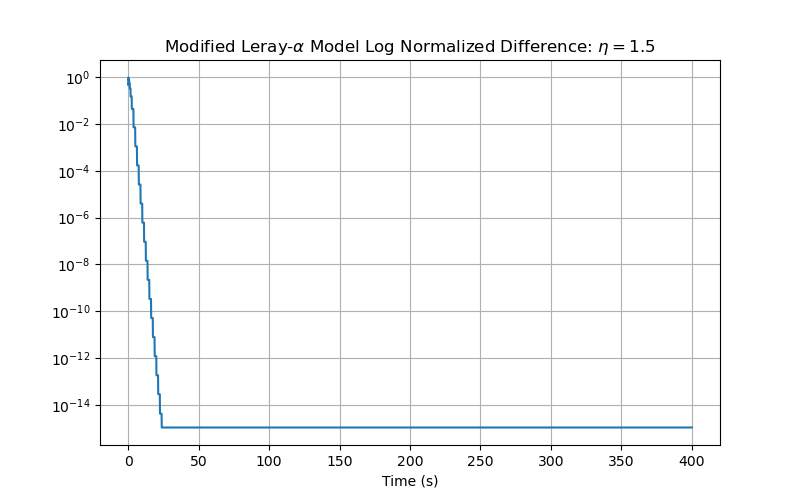}
    \caption{Error plot of modified Leray-$\alpha$ model with high $\eta$ value-with random initial conditions case.}
    \label{fig13}
\end{figure}
\begin{figure}
\centering
    \includegraphics[totalheight=6cm]{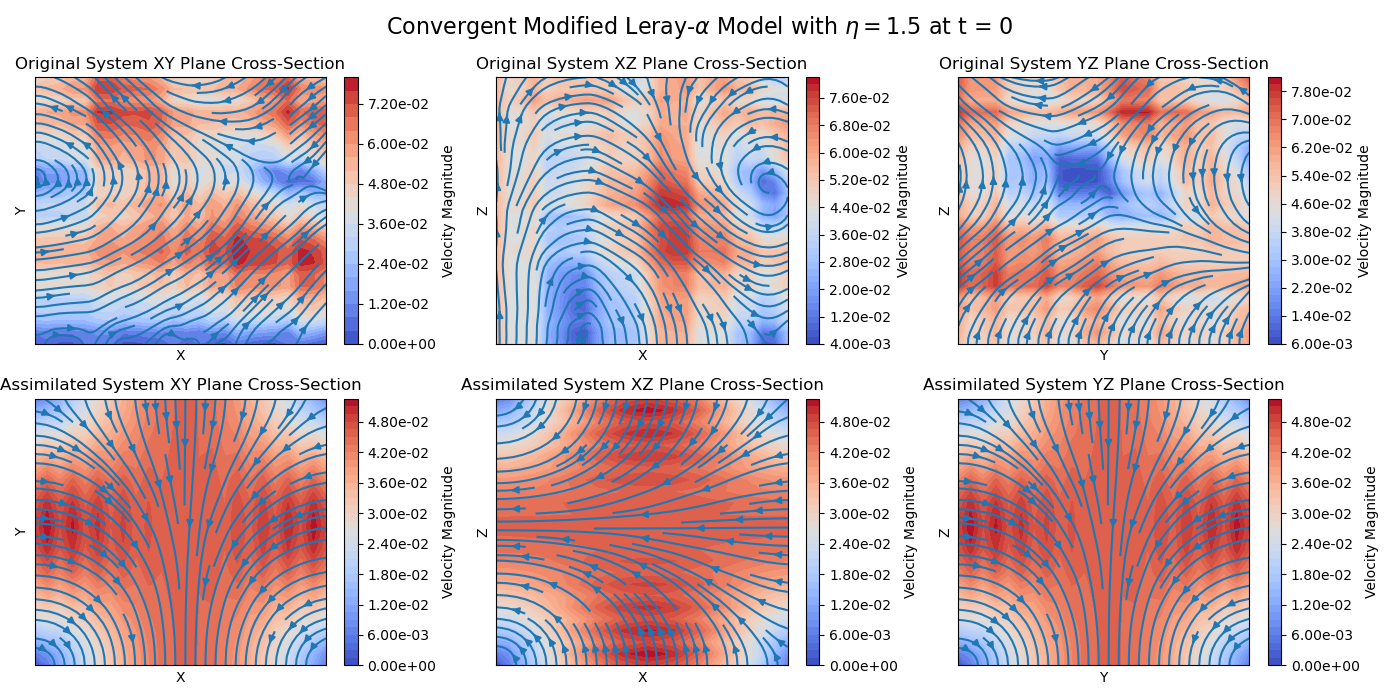}
    \caption{Velocity contour of modified Leray-$\alpha$ model with high $\eta$ value-with random initial conditions case at $t=0$.}
    \label{fig14}
\end{figure}
\begin{figure}
\centering
    \includegraphics[totalheight=6cm]{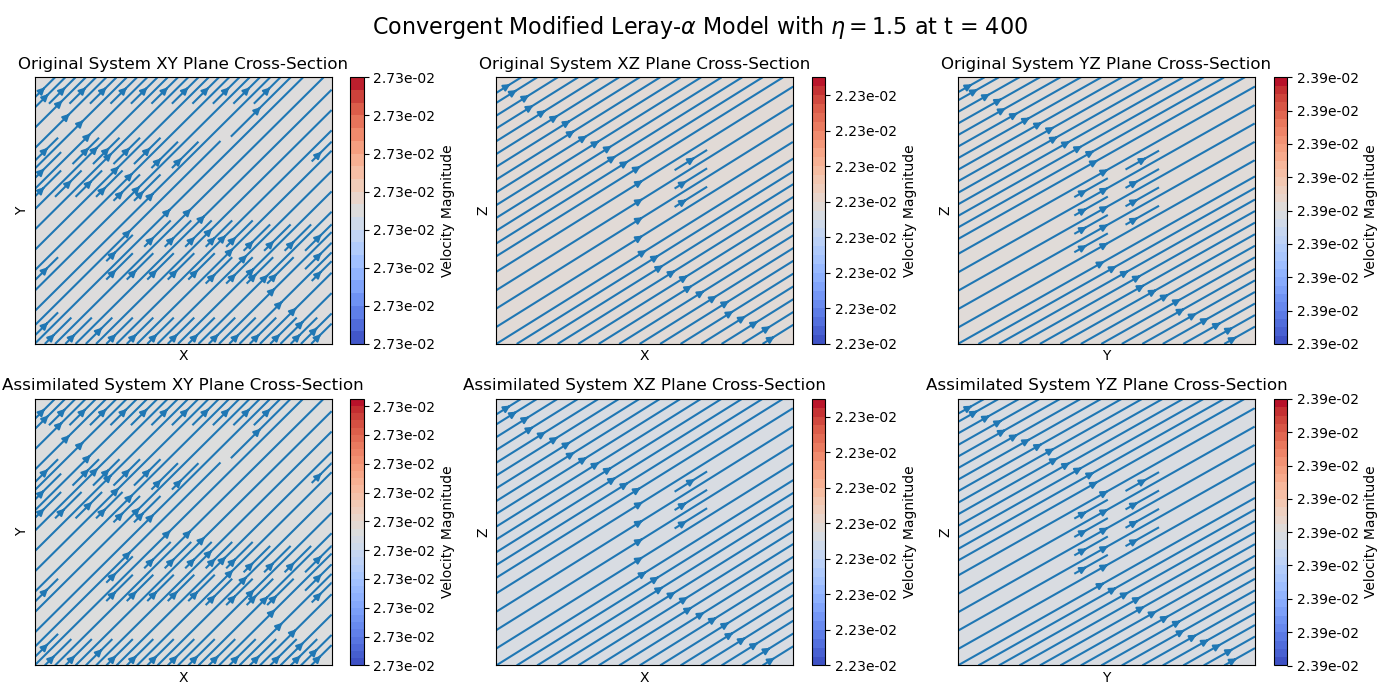}
    \caption{Velocity contour of modified Leray-$\alpha$ model with high $\eta$ value-with random initial conditions case at $t=400$.}
    \label{fig15}
\end{figure}
\begin{figure}
\centering
    \includegraphics[totalheight=7cm]{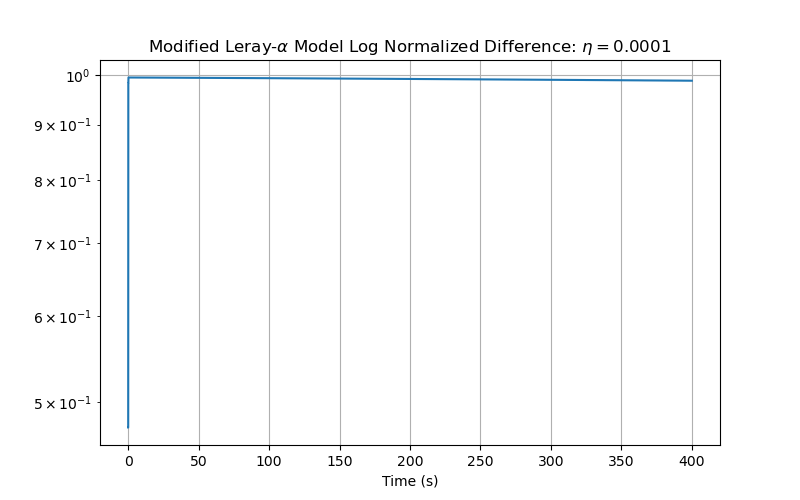}
    \caption{Error plot of modified Leray-$\alpha$ model with low $\eta$ value-with random initial conditions case.}
    \label{fig16}
\end{figure}
\begin{figure}
\centering
    \includegraphics[totalheight=6cm]{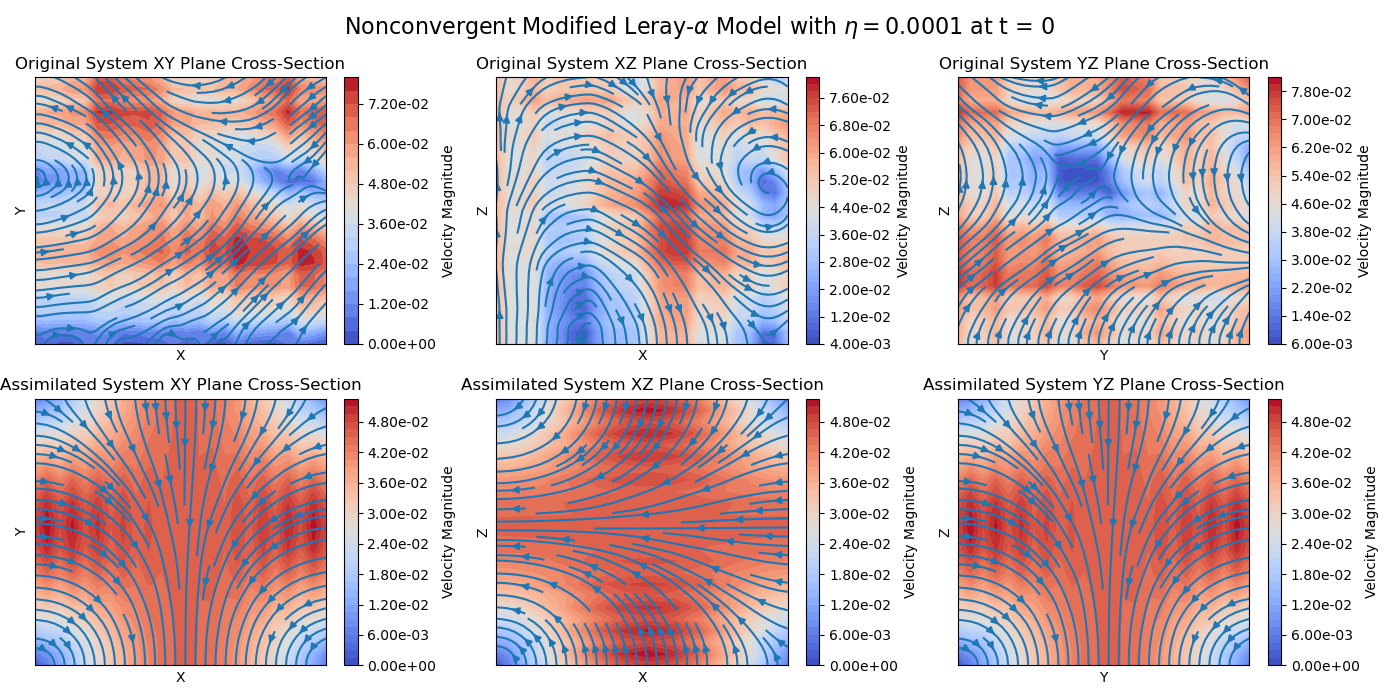}
    \caption{Velocity contour of modified Leray-$\alpha$ model with low $\eta$ value-with random initial conditions case at $t=0$.}
    \label{fig17}
\end{figure}
\begin{figure}
\centering
    \includegraphics[totalheight=6cm]{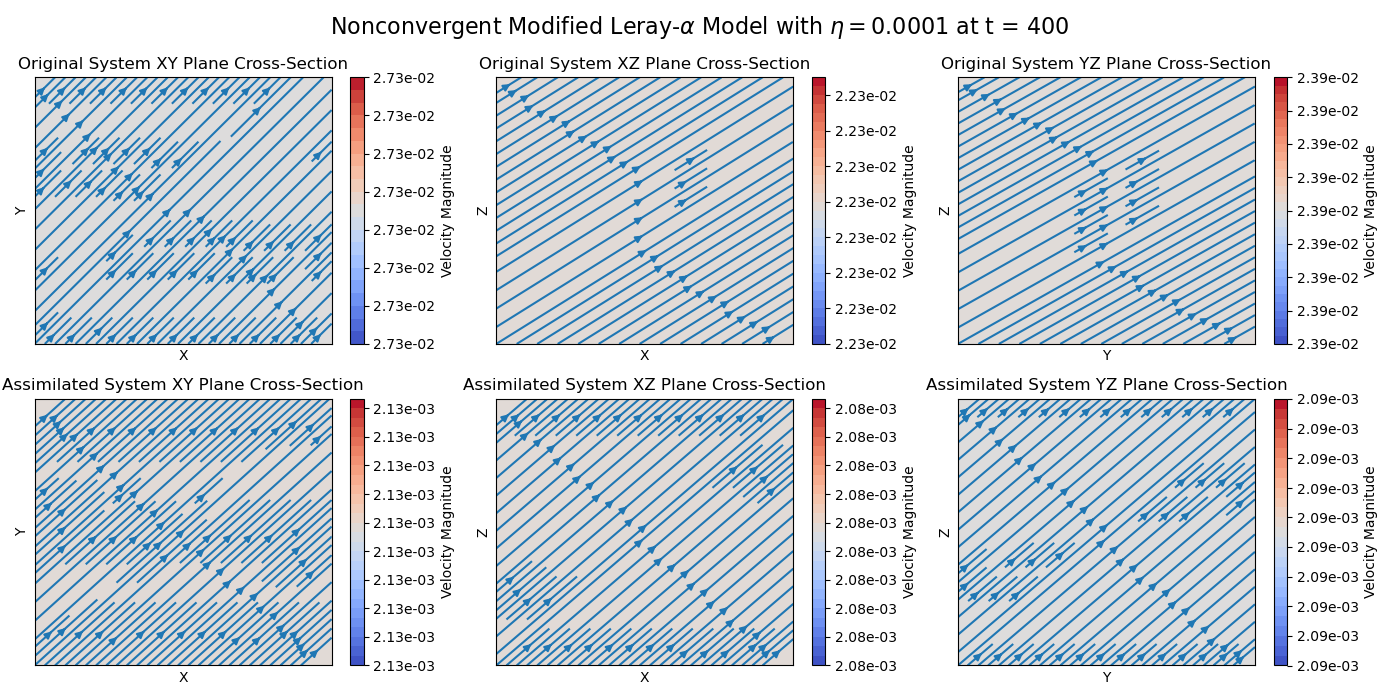}
    \caption{Velocity contour of modified Leray-$\alpha$ model with high $\eta$ value-with random initial conditions case at $t=400$.}
    \label{fig18}
\end{figure}
\subsection{Discussions on the numerical computations} 
When exploring the behavior of the ML-$\alpha$ data assimilation model, it is found that
convergence is always achieved when the three hypotheses are met. If this is the case,
the speed of convergence seems to decrease with the value of $\eta$. Besides testing $\eta=1.5$ as showing above, we have also tested $\eta=1$ which reaches the
computational floating point error of order of magnitude $10^{-15}$ after about 40 seconds. At this point, the models will not converge to each other any further due to the numerical truncation
error that occurs in the 16 bit floating point value. When $\eta$ is set to 1.5, the convergence of the
original and assimilated system reach an order of magnitude of $10^{-15}$ after about 20 seconds. When the conditions are not met and $\eta$ is very small, the systems very quickly diverge to different
average flow magnitudes. Moreover, in this numerical test, the initial conditions of the original system are not set with any notable
similarity to those of the assimilated system. We also test these same conditions with an additional randomization term added to the original
system. This serves to help us evaluate the role of random perturbations in the systems, but it is evident
that this term has very little or no impact in the systems’ convergence. We do observe some
aliasing behavior when $\eta$ is set to 1 (and is generally close to the $\eta$ condition). This behavior is smoothed out when $\eta$ is raised slightly back up to 1.5. In both randomized and smooth cases, convergence is achieved when all three convergence
conditions are met.
\section{Conclusions}\label{conclusion}
In this work, we proposed a continuous data assimilation algorithm for the ML-$\alpha$ model. We began by proving the global well-posedness of the assimilated system. Unlike previous studies in the literature, our approximate solution is with the length scale parameter $\alpha>0$ being considered unknown. We demonstrated that, under suitable conditions, the approximate solution converges to the true solution, with an error influenced by factors such as viscosity, the forcing term, and estimates of the $L^2$-norm of the true solution and its derivatives. Additionally, the error is evidently affected by the difference between the true and approximating parameters. Numerical simulations were provided to validate our theoretical results.

In the future, we aim to implement a parameter recovery algorithm for the ML-$\alpha$ model. Specifically, we plan to design an algorithm capable of recovering the unknown parameter $\alpha$. Moreover, we plan to conduct a comparison between various turbulence models with continuous data assimilation algorithms.\\

\textbf{Acknowledgment}
\vspace{3pt}

Samuel Little and Jing Tian's work is partially supported by the NSF LEAPS-MPS Grant $\#2316894$.

\end{document}